\newtheorem{theorem}{Theorem}[section]
\newtheorem{lemma}[theorem]{Lemma}
\newtheorem{definition}[theorem]{Definition}
\newtheorem{proposition}[theorem]{Proposition}
\newtheorem{example}[theorem]{Example}
\def\<{\langle}
\def\>{\rangle}
\def\a{\alpha}
\def\b{\beta}
\def\l{\lambda}
\def\o{\otimes}
\def\om{\omega}
\def\t{\tau}
\def\tr{\triangleright}
\def\un{\underline}
\date{}
\begin{document}
\renewcommand{\baselinestretch}{1.2}
\renewcommand{\arraystretch}{1.0}
\title{\bf More properties of Yetter-Drinfeld category over dual quasi-Hopf algebras}
 \date{}
\author {{\bf Daowei Lu$^1$,  Xiaohui Zhang$^2$, Dingguo Wang$^2$\footnote{Corresponding author: dgwang@qfnu.edu.cn
 }   }\\
{\small $^1$Department of Mathematics, Jining University}\\
{\small Qufu, Shandong 273155, P. R. China}\\
{\small $^2$ School of Mathematical Sciences, Qufu Normal University}\\
{\small Qufu, Shandong 273165, P. R. of China}}

 \maketitle
\begin{center}
\begin{minipage}{12.cm}

\noindent{\bf Abstract.}
Let $H$ be a dual quasi-Hopf algebra. A. Balan has introduced the right-right Yetter-Drinfeld module over $H$ in [ Yetter-Drinfeld modules and Galois extensions over coquasi-Hopf algebras, U.P.B. Sci. Bull., Series A, 71(2009): 43--60]. In this paper we will introduce more properties of Yetter-Drinfeld modules. Firstly we will give all other three kinds of Yetter-Drinfeld modules over $H$, and the monoidal and braided structure of the categories of Yetter-Drinfeld modules explicitly. Then we prove that the category $^H_H\mathcal{YD}^{fd}$ of finite-dimensional left-left Yetter-Drinfeld modules is rigid. Finally we will study the braided cocommutativity of the coalgebra $H_0$ in $^H_H\mathcal{YD}$.
\\

\noindent{\bf Keywords:} Dual quasi-Hopf algebra; Yetter-Drinfeld module; Rigid braided monoidal category; Braided cocommutativity.
\\

\noindent{\bf  Mathematics Subject Classification 2010:} 16T05.
 \end{minipage}
 \end{center}
 \normalsize\vskip1cm

\section*{Introduction}

Quasi-bialgebras and quasi-Hopf algebras have been introduced by Drinfeld in \cite{D}, in connection with the Knizhnik-Zamolodchikov system of partial differential equations, and have been used afterwards in several branches of mathematics and physics.  In a quasi-bialgebra $H$, the comultiplication is not coassociative but is quasi-coassociative in the sense that the comultiplication is coassociative up to conjugation by an invertible element $\Phi\in H\o H\o H$. Equivalently the representation category of $H$ is not a strict monoidal category while the reassociator is not trivial. If we draw our attention to the category of co-representations of a coalgebra with non-associative multiplication, we get the concepts of dual quasi-bialgebra and of dual quasi-Hopf algebra. These notions have been introduced by Majid in \cite{M2} in order to prove a Tannaka-Krein type theorem for quasi-Hopf algebras.

For a dual quasi-Hopf algebra $H$, the category of right $H$-comodule $\mathcal{M}^H$ is monoidal with the usual tensor product. The difference between a dual quasi-Hopf algebra and a Hopf algebra lies in the fact that the associativity of tensor product in the category $\mathcal{M}^H$ is not trivial, but modified by an invertible element $\sigma\in(H\o H\o H)^*$. Consequently the multiplication of $H$ is no longer associative.

In \cite{M2}, the left Yetter-Drinfeld module over quasi-Hopf algebras was firstly constructed by S. Majid with the help of the isomorphism between the category of Yetter-Drinfeld modules and the center of the representation category. Subsequently, Bulacu, Caenepeel and Panaite in \cite{BCP} introduced all kinds of Yetter-Drinfeld modules and showed that the category of finite dimensional Yetter-Drinfeld modules is rigid. Following the ideas of S. Majid, in \cite{Ba3} Balan introduced the notion of right-right Yetter-Drinfeld module over dual quasi-Hopf algebra and studied its Galois extension. Later on, Ardizzoni in \cite{AP} introduced another form of Yetter-Drinfeld module through the isomorphism between the category of Yetter-Drinfeld modules and the category of Hopf bimodules.

Motivated by these ideas, in this paper, we will continue the study of the category of Yetter-Drinfeld modules over dual quasi-Hopf algebra $H$. Firstly we will give explicitly the other three kinds of Yetter-Drinfeld modules, and their braided monoidal structures. Moreover $H$ itself is made to be a Yetter-Drinfeld module via the coadjoint coaction and quasi-analogue regular action. And some isomorphisms of braided monoidal category are established, such as
$$\mathcal{YD}^H_H\cong\! \overline{ ^{H^{op,cop}} _{H^{op,cop}}\mathcal{YD}},\ \ ^H\mathcal{YD}_H\cong\! \overline{_{H^{op,cop}}\mathcal{YD}^{H^{op,cop}}},$$
and
$$ ^H\mathcal{YD}^{in}_H\cong\! ^H_H\mathcal{YD}.$$
Then we will show that the category $^H_H\mathcal{YD}^{fd}$ of finite-dimensional left-left Yetter-Drinfeld modules is braided rigid. Finally we will study a special object $H_0$, and prove that it is a braided cocommutative coalgebra in the category of Yetter-Drinfeld modules.

This paper is organized as follows. In section 1, we will review the basic results about dual quasi-Hopf algebras and monoidal category. In section 2, we will firstly introduce the notion of left-left Yetter-Drinfeld modules over dual quasi-Hopf algebra $H$. In particular, as in the case of Hopf algebras, $H$ itself is made to be an object in $^H_H\mathcal{YD}$, the category of left-left Yetter-Drinfeld modules. For this purpose, we will introduce some equations needed in our computations. We will give explicitly the braided monoidal structure of the categories of Yetter-Drinfeld modules. Then we will construct some isomorphisms between the Yetter-Drinfeld categories.
In section 3, we will verify that the category of finite dimensional left-left Yetter-Drinfeld modules is rigid, and give the explicit forms of the left and right duals of any object. In section 4, we will firstly show that $H_0$ is a coalgebra in $^H_H\mathcal{YD}$, and then prove that it is braided cocommutative.

\section{Preliminary}
\def\theequation{1.\arabic{equation}}
\setcounter{equation} {0} 

Throughout this article, let $k$ be a fixed field. All algebras, coalgebras, linear spaces etc. will be over $k$; unadorned $\o$ means $\o_k$.

\subsection{Dual quasi-Hopf algebra}

Recall from \cite{M1} that a dual quasi-bialgebra $H$ is a coassociative coalgebra with comultiplication $\Delta$ and counit $\varepsilon$ together with coalgebra morphisms $m_H:H\o H\rightarrow H$ (the multiplication, we write $m_H(h\o h')=hh'$) and $\eta_H:k\rightarrow H$ (the unit, we write $\eta_H(1)=1$), and a invertible element $\sigma\in(H\o H\o H)^*$ (the reassociator), such that for all $a,b,c,d\in H$ the following relations hold
\begin{align}
a_1(b_1c_1)\sigma(a_2,b_2,c_2)&=\sigma(a_1,b_1,c_1)(a_2b_2)c_2,\label{1a}\\
1a&=a1=a,\label{1b}\\
\sigma(a_1,b_1,c_1d_1)\sigma(a_2b_2,c_2,d_2)&=\sigma(b_1,c_1,d_1)\sigma(a_1,b_2c_2,d_2)\sigma(a_2,b_3,c_3),\label{1c}\\
\sigma(a,1,b)&=\varepsilon(a)\varepsilon(b).\label{1d}
\end{align}

$H$ is called a dual quasi-Hopf algebra if, moreover, there exists an anti-morphism $s$ of the coalgebra $H$ and element $\a,\b\in H^*$ such that for all $h\in H$,
\begin{eqnarray}
&&s(h_1)\a(h_2)h_3=\a(h)1,\quad\quad h_1\b(h_2)s(h_3)=\b(h)1,\label{1e}\\
&&\sigma(h_1\b(h_2),s(h_3),\a(h_4)h_5)=\sigma^{-1}(s(h_1),\a(h_2)h_3,\b(h_4)s(h_5))=\varepsilon(h).\label{1f}
\end{eqnarray}

It follows from the axioms that $s(1)=1$ and $\a(1)\b(1)=1$. Moreover (\ref{1c}) and (\ref{1d}) imply that
\begin{equation}
\sigma(1,a,b)=\sigma(a,b,1)=\varepsilon(a)\varepsilon(b).\label{1g}
\end{equation}

Together with a dual quasi-Hopf algebra $H=(H,m,1,\Delta,\varepsilon,\sigma,s,\a,\b)$ with bijective antipode, we also have $H^{op},H^{cop}$ and $H^{op,cop}$ as dual quasi-Hopf algebras. The dual quasi-Hopf structures are obtained by putting
$\sigma_{cop}=\sigma^{-1},\sigma_{op}=(\sigma^{-1})^{321}$, and $\sigma_{op,cop}=\sigma^{321}$. $s_{op}=s_{cop}=(s^{-1}_{op,cop})=s^{-1}$, $\a_{cop}=\b s^{-1}$, $\a_{op}=\a s^{-1}$, $\a_{op,cop}=\b$, $\b_{cop}=\a s^{-1}$, $\b_{op}=\b s^{-1}$, $\b_{op,cop}=\a$. Here $\sigma^{321}(a,b,c)=\sigma(c,b,a)$.

We recall that an invertible element $F\in(H\o H)^*$ satisfying $F(1,a)=F(a,1)=\varepsilon(a)$, induces a twisted transformation
\begin{eqnarray}
&&a\cdot b=F(a_1,b_1)a_2b_2F^{-1}(a_3,b_3),\\
&&\sigma_F(a,b,c)=F(b_1,c_1)F(a_1,b_2c_2)\sigma(a_2,b_3,c_3)F^{-1}(a_3b_4,c_4)F^{-1}(a_4,b_5)\label{1h'}
\end{eqnarray}

For a Hopf algebra, one knows that the antipode is an anti-algebra morphism, i.e., $s(ab)=s(b)s(a)$. For a dual quasi-Hopf algebra, this is true only up to a twist, namely, there exists a twist transformation $f\in(H\o H)^*$ such that for all $a,b\in H$,
\begin{equation}
f(a_1,b_1)s(a_2b_2)g(a_3,b_3)=s(b)s(a),\label{1h}
\end{equation}
where $g$ denotes the convolution inverse of $f$.

The element $f$ can be computed explicitly. For all $a,b,c,d\in H$, set
\begin{eqnarray*}
&&\nu(a,b,c,d)=\sigma(a_1,b_1,c_1)\sigma^{-1}(a_2b_2,c_2,d),\\
&&\mu(a,b,c,d)=\sigma(a_1b_1,c_1,d)\sigma^{-1}(a_2,b_2,c_2).
\end{eqnarray*}
Define elements $\l,\chi\in(H\o H)^*$ by
\begin{eqnarray*}
&&\l(a,b)=\nu(s(b_1),s(a_1),a_3,b_3)\a(a_2)\a(b_2),\\
&&\chi(a,b)=\mu(a_1,b_1,s(b_3),s(a_3))\b(a_2)\b(b_2).
\end{eqnarray*}

Then $f$ and $g$ are given by the following formulae:
\begin{eqnarray*}
&&f(a,b)=\sigma^{-1}(s(b_1)s(a_1),a_3b_3,s(a_5b_5))\l(a_2,b_2)\b(a_4b_4),\\
&&g(a,b)=\sigma^{-1}(s(a_1b_1),a_3b_3,s(b_5)s(a_5))\chi(a_4,b_4)\a(a_2b_2).
\end{eqnarray*}

The elements $\l,\chi$ and the twist $f$ fulfill the relations
\begin{equation}
f(a_1,b_1)\a(a_2b_2)=\l(a,b),\quad \b(a_1b_1)g(a_2,b_2)=\chi(a,b).\label{1i}
\end{equation}

The corresponding reassociator is given by
\begin{equation}
\sigma_f(a,b,c)=\sigma(s(c),s(b),s(a)).\label{1j}
\end{equation}

Recall from \cite{L} that a coquasitriangular dual quasi-Hopf algebra is a dual quasi-Hopf algebra $H$ with an invertible element $\varphi\in(H\o H)^*$ satisfying
\begin{align*}
&(1)~\varphi(gh,l)=\sigma(l_1,g_1,h_1)\varphi(g_2,l_2)\sigma^{-1}(g_3,l_3,h_2)\varphi(h_3,l_4)\sigma(g_4,h_4,l_5),\\
&(2)~\varphi(g,hl)=\sigma^{-1}(h_1, l_1,g_1)\varphi(g_2,l_2)\sigma(h_2,g_3,l_3)\varphi(g_4,h_3)\sigma^{-1}(g_5,h_4,l_4),\\
&(3)~\varphi(g_1,h_1)g_2h_2=h_1 g_1\varphi(g_2,h_2),\\
&(4)~\varphi(1,g)=\varphi(g,1)=\varepsilon(g),
\end{align*}
for all $g,h,l\in H$.

\subsection{Monoidal categories and Center construction}

A monoidal category means a category $\mathcal{C}$ with objects $U,V,W,$ etc., a functor $\o:\mathcal{C}\times\mathcal{C}\rightarrow\mathcal{C}$ equipped with an natural transformation consisting of functorial isomorphism $a_{_{U,V,W}}:(U\o V)\o W\rightarrow U\o(V\o W)$ satisfying a pentagon identity, and a compatible unit object $I$ and associated functorial isomorphisms (the left and the right unit constraints, $l_V:V\cong V\o I$ and $r_V:V\cong I\o V$, respectively.) Now if $\mathcal{C}$ and $\mathcal{D}$ are monoidal categories then, roughly speaking, we say that $F:\mathcal{C}\rightarrow\mathcal{D}$ is a monoidal functor if it respects the tensor products (in the sense that for any two objects $U,V\in\mathcal{C}$ there exists a functorial isomorphism $\Psi:F(U)\o F(V)\rightarrow F(U\o V)$ such that $\Psi$ respects the associativity constraints), the unit object and the left and right unit
constraints (for a complete definition see \cite{M1}).

If $H$ is a dual quasi-Hopf algebra, then the categories $\mathcal{M}^H$ and $\mathcal{^HM}$ are monoidal categories. The associative constraint on $\mathcal{M}^H$ is the following: for any $M,N,P\in\mathcal{M}^H$, and $m\in M,n\in N$,
$a_{_{M,N,P}}:(M\o N)\o P\rightarrow M\o(N\o P)$ is given by
\begin{eqnarray*}
a_{_{M,N,P}}((m\o n)\o p)=\sigma(m_{(1)},n_{(1)},p_{(1)})m_{(0)}\o(n_{(0)}\o p_{(0)})
\end{eqnarray*}
On $\mathcal{^HM}$, the associative constraint is given by
\begin{eqnarray*}
a_{_{M,N,P}}((m\o n)\o p)=\sigma^{-1}(m_{(-1)},n_{(-1)},p_{(-1)})m_{(0)}\o(n_{(0)}\o p_{(0)}).
\end{eqnarray*}

Let $(\mathcal{C},\o,I,a,l,r)$ be a monoidal category, and $V\in\mathcal{C}$. $V^*\in\mathcal{C}$ is called the left dual of $V$, if there exist two morphisms $ev_{_{V}}:V^*\o V\rightarrow I$ and $coev_{_{V}}:I\rightarrow V\o V^*$ such that
\begin{eqnarray*}
&&(V\o ev_{_{V}})\circ a_{_{V,V^*,V}}\circ(coev_{_{V}}\o V)=V,\\
&&(ev_{_{V}}\o V^*)\circ a^{-1}_{_{V^*,V,V^*}}\circ(V^*\o coev_{_{V}})=V^*.
\end{eqnarray*}
$^*V\in\mathcal{C}$ is called a right dual of $V$ if there exist two morphisms $ev'_{_{V}}:V\o\! ^*V\rightarrow I$ and $coev'_{_{V}}:I\rightarrow\! ^*V\o V$ such that
\begin{eqnarray*}
&&(^*V\o ev'_{_{V}})\circ a_{_{^*V,V,^*V}}\circ(coev'_{_{V}}\o\ ^*V)=\ ^*V,\\
&&(ev'_{_{V}}\o V)\circ a^{-1}_{_{V,^*V,V}}\circ(V\o coev'_{_{V}})=V.
\end{eqnarray*}
$\mathcal{C}$ is called a rigid monoidal category if every object of $\mathcal{C}$ has a left and right dual. The category $\mathcal{^HM}_{fd}$ of finite dimensional modules over a dual quasi-Hopf algebra $H$ is rigid. For $V\in\mathcal{^HM}_{fd}$, $V^*=$Hom$(V,k)$ with left coaction $\l(\varphi)=\langle \varphi,v_{i(0)}\rangle s^{-1}(v_{i(-1)})\o v^i$. The evaluation and coevaluation are given by
\begin{equation}
ev_{_V}(\varphi\o v)=\b(s^{-1}(v_{(-1)}))\varphi(v_{(0)}),\quad coev_{_V}(1)=\a(s^{-1}(v_{i(-1)}))v_{i(0)}\o v^i.
\end{equation}
where $\{v_i\}_i$ is a basis in $V$ with dual basis $\{v^i\}_i$.

The right dual $^*V$ of $V$ is the same dual vector space equipped with the left $H$-comodule structure given by $\l(\varphi)=\langle \varphi,v_{i(0)}\rangle s(v_{i(-1)})\o v^i$ and
\begin{equation}
ev'_{_V}(v\o\varphi)=\b(v_{(-1)})\varphi(v_{(0)}),\quad coev'_{_V}(1)=v^i\o\a(v_{i(-1)})v_{i(0)}.
\end{equation}

For a braided monoidal category $\mathcal{C}$, let $\mathcal{C}^{in}$ be equal to $\mathcal{C}$ as a monoidal category, with the mirror-reversed braiding $\tilde{c}_{_{M,N}}=c^{-1}_{_{M,N}}$.

Following \cite{M2}, the left weak center $\mathcal{W}_l\mathcal{(C)}$ is the category with the objects $(V,s_{_{V,-}})$, where $V\in\mathcal{C}$ and $s_{_{V,-}}:V\o-\rightarrow-\o V$ is a family of natural transforms such that $s_{_{V,I}}=id_V$ and for all $X,Y\in\mathcal{C}$
\begin{equation}
(X\o s_{_{V,Y}})\circ a_{_{X,V,Y}}\circ(s_{_{V,X}}\o Y)=a_{_{X,Y,V}}\circ s_{_{V,X\o Y}}\circ a_{_{V,X,Y}}.
\end{equation}
A morphism between $(V,s_{_{V,-}})$ and $(V',s_{_{V',-}})$ consists of $\psi:V\rightarrow V'$ in $\mathcal{C}$ such that
$$(X\o \psi)\circ s_{_{V,X}}=c_{_{V',X}}\circ(\psi\o X).$$
$\mathcal{W}_l\mathcal{(C)}$ is a prebraided monoidal category. The tensor product is
$$(V,s_{_{V,-}})\o(V',s_{_{V',-}})=(V\o V',s_{_{V\o V',-}}),$$
with
\begin{equation}
s_{_{V\o V',X}}=a_{_{X,V,V'}}\circ(s_{_{V,X}}\o V')\circ a^{-1}_{_{V,X,V'}}\circ(V\o s_{_{V',X}})\circ a_{_{V,V',X}},\label{1n}
\end{equation}
and the unit is $(I,id)$. The braiding $s$ on $\mathcal{W}_l\mathcal{(C)}$ is given by
$$c_{_{V,V'}}=s_{_{V,V'}}:(V,s_{_{V,-}})\o(V',s_{_{V',-}})\rightarrow(V',s_{_{V',-}})\o(V,s_{_{V,-}}).$$
The center $\mathcal{Z}_l\mathcal{(C)}$ is the full subcategory of $\mathcal{W}_l\mathcal{(C)}$ consisting of objects $(V,s_{_{V,-}})$ with $s_{_{V,-}}$ a natural isomorphism. $\mathcal{Z}_l\mathcal{(C)}$ is a braided monoidal category.

The right weak center $\mathcal{W}_r\mathcal{(C)}$ is the category with the objects $(V,c_{_{-,V}})$, where $V\in\mathcal{C}$ and $t_{_{-,V}}:-\o V\rightarrow V\o-$ is a family of natural transforms such that $t_{_{I,V}}=id_V$ and
\begin{equation}
a^{-1}_{_{V,X,Y}}\circ t_{_{X\o Y,V}}\circ a^{-1}_{_{X,Y,V}}=(t_{_{X,V}}\o Y)\circ a^{-1}_{_{X,V,Y}}\circ(X\o t_{_{Y,V}}),\label{1k}
\end{equation}
for all $X,Y\in\mathcal{C}$. A morphism between $(V,t_{_{-,V}})$ and $(V',t_{_{-,V'}})$ consists of $\psi:V\rightarrow V'$ in $\mathcal{C}$ such that
$$(\psi\o X)\circ t_{_{X,V}}=t_{_{X,V'}}\circ(X\o \psi).$$

$\mathcal{W}_r\mathcal{(C)}$ is a prebraided monoidal category. The unit is $(I,id)$ and the tensor product is
$$(V,t_{_{-,V}})\o(V',t_{_{-,V'}})=(V\o V',(V,t_{_{-,V\o V'}}))$$
with
\begin{equation}
t_{_{-,V\o V'}}=a^{-1}_{_{V,V',X}}\circ(V\o t_{_{X,V'}})\circ a_{_{V,X,V'}}\circ(t_{_{X,V}}\o V')\circ a^{-1}_{_{X, V,V'}}.\label{1m}
\end{equation}
The braiding $d$ is given by
$$d_{_{V,V'}}=t_{_{V,V'}}:(V,s_{_{-,V}})\o(V',s_{_{-,V'}})\rightarrow(V',s_{_{-,V'}})\o(V,s_{_{-,V}}).$$

The center $\mathcal{Z}_r\mathcal{(C)}$ is the full subcategory of $\mathcal{W}_r\mathcal{(C)}$ consisting of objects $(V,t_{_{-,V}})$ with $t_{_{-,V}}$ a natural isomorphism. $\mathcal{Z}_r\mathcal{(C)}$ is a braided monoidal category.

Let $(\mathcal{C},\o,I,a,l,r)$ be a monoidal category. Then we have a second monoidal structure on $\mathcal{C}$, defined as
$$\overline{\mathcal{C}}=(\mathcal{C},\overline{\o}=\o\circ\t,I,\overline{a},r, l),$$
where $\t:\mathcal{C}\times \mathcal{C}\rightarrow \mathcal{C}\times \mathcal{C}, (U,V)\mapsto(V,U)$ and $\overline{a}$ given by $\overline{a}_{U,V,W}=a^{-1}_{W,V,U}$.

If $c$ is a braiding on $\mathcal{C}$, then $\overline{c}$, defined by $\overline{c}_{U,V}=c_{V,U}$ is a braiding on $\mathcal{C}$.

It is obvious that
\begin{proposition}\cite{BCP}
Let $(\mathcal{C},\o,I,a,l,r)$ be a monoidal category. Then
$$\overline{\mathcal{W}_l(\mathcal{C})}\cong \mathcal{W}_r(\overline{\mathcal{C}}),\ \ \overline{\mathcal{W}_r(\mathcal{C})}\cong \mathcal{W}_l(\overline{\mathcal{C}}),$$
as prebraided monoidal category. And
$$\overline{\mathcal{Z}_l(\mathcal{C})}\cong \mathcal{Z}_r(\overline{\mathcal{C}}),\ \ \overline{\mathcal{Z}_r(\mathcal{C})}\cong \mathcal{Z}_l(\overline{\mathcal{C}}),$$
as braided monoidal category.
\end{proposition}

\section{Yetter-Drinfeld modules over a dual quasi-Hopf algebra}
\def\theequation{2.\arabic{equation}}
\setcounter{equation} {0} \hskip\parindent

In \cite{Ba2}, the category of right-right Yetter-Drinfeld module was constructed. In this section, we will construct all kinds of Yetter-Drinfeld modules over a dual-quasi Hopf algebra, and give their braided monoidal structure.

In a Hopf algebra $H$, we obviously have the identity $h(g_1s(g_2))=h\varepsilon(g)$ for all $g,h\in H$. Now this formula could be generalized to the dual quasi-Hopf algebra setting.

Let $H$ be a dual quasi-Hopf algebra. Recall from \cite{Ba2}, for all $a,b\in H$, define elements $p^R,q^R,p^L,q^L$ in $(H\o H)^*$ by
\begin{eqnarray*}
&&p^R(a,b)=\sigma^{-1}(a,b_1,s(b_3))\b(b_2),\quad q^R(a,b)=\sigma(a,b_3,s^{-1}(b_1))\a(s^{-1}(b_2)),\\
&&p^L(a,b)=\sigma(s^{-1}(a_3),a_1,b)\b(s^{-1}(a_2)),\quad q^L(a,b)=\sigma^{-1}(s(a_1),a_3,b)\a(a_2),
\end{eqnarray*}

\begin{lemma}
Let $H$ be a dual quasi-Hopf algebra with bijective antipode $s$.
For all $a,b\in H$,
\begin{eqnarray}
&&p^R(a_1,b)a_2=(a_1b_1)p^R(a_2,b_2)s(a_3),\quad q^R(a_2,b)a_1=(a_2b_3)q^R(a_1,b_2)s^{-1}(b_1),\label{2a}\\
&&p^L(a,b_1)b_2=s^{-1}(a_3)p^L(a_2,b_2)(a_1b_1),\quad q^L(a,b_2)b_1=s(a_1)q^L(a_2,b_1)(a_3b_2),\label{2b}
\end{eqnarray}
and
\begin{eqnarray}
&&q^R(a_1b_1,s(b_3))p^R(a_2,b_2)=\varepsilon(a)\varepsilon(b),\quad p^L(s(a_1),a_3b_2)q^L(a_2,b_1)=\varepsilon(a)\varepsilon(b),\label{2f}\\
&&q^L(s^{-1}(a_3),a_1b_1)p^L(a_2,b_2)=\varepsilon(a)\varepsilon(b),\ q^R(a_1,b_2)p^R(a_2b_3,s^{-1}(b_1))=\varepsilon(a)\varepsilon(b),\label{2g}\\
&&q^R(s(b_2),s(a_2))\sigma^{-1}(s(b_1)s(a_1),a_3,b_4)\a(b_3)=\lambda(a,b),\\
&&q^R(a_1,s(b_2))(a_2s(b_1))b_3=a_1q^R(a_2,s(b)).
\end{eqnarray}
Moreover we have the following formulae
\begin{align}
(1)\quad&q^R(a_1,b_1)q^R(a_2b_2,c_1)\sigma^{-1}(a_3,b_3,c_2)\nonumber\\
&=\sigma(a_2(b_4c_4),s^{-1}(c_1),s^{-1}(b_1))f(s^{-1}(c_2),s^{-1}(b_2))q^R(a_1,b_3c_3).\label{2c}\\
(2)\quad&\sigma(a_1,b_1,c_1)p^R(a_2b_2,c_2)p^R(a_3,b_3)\nonumber\\
  &=\sigma^{-1}(a_1(b_1c_1),s(c_4),s(b_4))p^R(a_2,b_2c_2)g(b_3,c_3).\label{2s}
  \end{align}
\end{lemma}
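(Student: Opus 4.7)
The plan is to establish the seven formulas in three groups of increasing difficulty, using only the defining axioms (\ref{1a})--(\ref{1f}) of a dual quasi-Hopf algebra, the normalization (\ref{1g}), the relations (\ref{1i}), and the explicit formulae for the twists $f$ and $g$ together with (\ref{1j}).

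For the four modular identities in (\ref{2a}) and (\ref{2b}), I verify a single representative, say $p^R(a_1,b)a_2=(a_1b_1)p^R(a_2,b_2)s(a_3)$. Starting from the right-hand side, I expand $p^R(a_2,b_2)$ by its definition, use coassociativity to align the comultiplications of $a$ and $b$, and apply the cocycle identity (\ref{1c}) to the quadruple $(a_1,b_1,b_2,s(b_4))$ in order to rearrange the $\sigma^{-1}$ factor. The antipode axiom (\ref{1e}) for $\beta$ then collapses $b_2\,\beta(b_3)\,s(b_4)$ into $\beta(b)\cdot 1$, leaving the left-hand side. The remaining three identities in (\ref{2a}), (\ref{2b}) follow in parallel fashion, with $\sigma$ replaced by $\sigma^{-1}$ and $s$ by $s^{-1}$ in accord with the opposite/co-opposite symmetries recorded in the preliminary.

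For the four convolution-inversion identities (\ref{2f}) and (\ref{2g}), I take the representative $q^R(a_1b_1,s(b_3))\,p^R(a_2,b_2)=\varepsilon(a)\varepsilon(b)$. Substituting both definitions produces a product of $\sigma^{\pm 1}$-evaluations on arguments built from $a,b,s(b)$, multiplied by $\alpha\beta$-factors. Coassociativity identifies matching middle arguments, and then the pentagon (\ref{1c}) compresses the two $\sigma^{\pm 1}$-factors into the form appearing on the left side of the antipode axiom (\ref{1f}); the $\alpha$ and $\beta$ factors then collapse through (\ref{1e}), while (\ref{1d}) and (\ref{1g}) dispose of the remaining trivial $\sigma$-values, producing $\varepsilon(a)\varepsilon(b)$.

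The main obstacle is the pair (\ref{2c}) and (\ref{2s}), which couple the computation to the twists $f$ and $g$. For (\ref{2s}), the plan is to expand the right-hand side by substituting the explicit formula for $g$ and the definition of $p^R(a_2,b_2c_2)$; then (\ref{1j}) permits rewriting the factor $\sigma^{-1}(s(b_1c_1),\ldots,s(c_5)s(b_5))$ as a $\sigma$-evaluation on arguments built directly from $a,b,c$. A double application of the cocycle identity (\ref{1c}) now factors this expression into $\sigma(a_1,b_1,c_1)\,p^R(a_2b_2,c_2)\,p^R(a_3,b_3)$ once the $\alpha\beta$-pieces are absorbed by (\ref{1e}); the relation (\ref{1i}) governs the $\chi$-simplification. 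Identity (\ref{2c}) is handled in the same spirit, using the formula for $f$ and the $\lambda$-relation in (\ref{1i}). The combinatorial bookkeeping in these two computations is the chief difficulty, and in each case I would split the derivation into two stages: first absorb the twist into $\sigma$ via (\ref{1j}), and then apply (\ref{1c}) together with the antipode axioms to bring the expression to the desired form.
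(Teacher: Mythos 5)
Your treatment of (\ref{2a})--(\ref{2g}) is roughly in line with the paper, which omits these as an easy exercise; one correction, though: the identities (\ref{2a})--(\ref{2b}) equate \emph{elements} of $H$, so the reassociator has to be commuted past actual products, and the tool for that is the quasi-associativity axiom (\ref{1a}), not the cocycle identity (\ref{1c}) you invoke ((\ref{1c}) is a relation among scalars only and enters for (\ref{2f})--(\ref{2g})). For the first identity of (\ref{2a}), for instance, one applies (\ref{1a}) to the triple $(a,b,s(b_4))$ and then collapses $b_2\,\beta(b_3)\,s(b_4)$ by (\ref{1e}); no use of (\ref{1c}) occurs.

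The genuine gap is in (\ref{2c}) and (\ref{2s}), the only parts the paper proves in detail. First, the step you rely on fails: you propose to convert the $\sigma^{-1}$-factor coming from the explicit formula for $g$, namely $\sigma^{-1}(s(b_1c_1),\,b_3c_3,\,s(c_5)s(b_5))$, into a reassociator on untwisted arguments via (\ref{1j}); but (\ref{1j}) only rewrites $\sigma(s(c),s(b),s(a))$, i.e.\ a reassociator \emph{all three} of whose arguments are antipodes, and here the middle argument $b_3c_3$ (and likewise the first arguments $a_2(b_4c_4)$, $a_1(b_1c_1)$ of the displayed factors in (\ref{2c}), (\ref{2s})) is not of that form. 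Second, even setting that aside, your plan for these two identities is only a list of ingredients; it does not indicate how the ten-plus $\sigma^{\pm1}$-factors are to be organized, and that organization is the actual content of the proof. The paper never expands the explicit formula for $f$ or $g$: it uses (\ref{1h}) to slide $f$ past $s^{-1}(b_2c_2)$ and (\ref{1i}) to absorb $f\cdot\alpha$ into $\lambda$, hence into the purely $\sigma$-built element $\nu$; it then introduces a reduction map $\omega:(H^{\otimes5})^*\to(H^{\otimes3})^*$ and exhibits both sides of (\ref{2c}) as $\omega(X)$, $\omega(Y)$ for explicit \emph{antipode-free} five-variable functionals $X,Y$, so that the cocycle manipulations (\ref{1a}), (\ref{1c}) can be performed before any antipodes or $\alpha$'s appear. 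Finally, (\ref{2s}) is not a parallel computation in the paper: it follows from (\ref{2c}) applied to $H^{cop}$, a shortcut you should adopt rather than redoing the argument for $g$.
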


\begin{proof}
We only prove (\ref{2c}) and (\ref{2s}), since the proof of (\ref{2a})--(2.5) is an easy exercise.
Define a map $\om:(H^{\o5})^*\rightarrow (H^{\o3})^*$ by
$$\om(\varphi)(a,b,c)=\varphi(a,b_3,c_3,s^{-1}(c_1),s^{-1}(b_1))\a(s^{-1}(b_2))\a(s^{-1}(c_2)),$$
for all $\varphi\in(H^{\o5})^*$ and $a,b,c\in H$.

The left side of (\ref{2c}) is equal to $\om(X)$, where for all $a,b,c,d,e\in H$,
$$X(a,b,c,d,e)=\sigma(a_1,b_1,e)\sigma(a_2b_2,c_1,d)\sigma^{-1}(a_3,b_3,c_2).$$
Since
\begin{align*}
&f(s^{-1}(c_1),s^{-1}(b_1))q^R(a,b_2c_2)\\
&=f(s^{-1}(c_1),s^{-1}(b_1))\sigma(a,b_4c_4,s^{-1}(b_2c_2))\a(s^{-1}(b_3c_3))\\
&\stackrel{(\ref{1h})}{=}\sigma(a,b_4c_4,s^{-1}(c_1)s^{-1}(b_1))f(s^{-1}(c_3),s^{-1}(b_3))\a(s^{-1}(c_2)s^{-1}(b_2))\\
&\stackrel{(\ref{1i})}{=}\sigma(a,b_3c_3,s^{-1}(c_1)s^{-1}(b_1))\l(s^{-1}(c_2),s^{-1}(b_2))\\
&=\sigma(a,b_5c_5,s^{-1}(c_1)s^{-1}(b_1))\nu(b_4,c_4,s^{-1}(b_2),s^{-1}(b_2))\a(s^{-1}(b_3))\a(s^{-1}(c_3)),
\end{align*}
we have the right side of (\ref{2c}):
\begin{align*}
&\sigma(a_2(b_4c_4),s^{-1}(c_1),s^{-1}(b_1))f(s^{-1}(c_2),s^{-1}(b_2))q^R(a_1,b_3c_3)\\
&=\sigma(a_2(b_8c_9),s^{-1}(c_1),s^{-1}(b_1))\sigma(a_1,b_7c_8,s^{-1}(c_2)s^{-1}(b_2))\\
&\quad\sigma(b_5,c_6,s^{-1}(c_4))\sigma^{-1}(b_6c_7,s^{-1}(c_3),s^{-1}(b_3))\a(s^{-1}(b_4))\a(s^{-1}(c_5)).
\end{align*}
Define an element $Y\in(H^{\o5})^*$ by
$$Y(a,b,c,d,e)=\sigma^{-1}(c_1,d_1,e_1)\sigma(b_1,c_2,d_2e_2)\sigma(a_1,b_2c_3,d_3e_3)\sigma(a_2(b_3c_4),d_4,e_4).$$
Then
\begin{align*}
&\om(Y)(a,b,c)\\
&=Y(a,b_3,c_3,s^{-1}(c_1),s^{-1}(b_1))\a(s^{-1}(b_2))\a(s^{-1}(c_2))\\
             &=\sigma^{-1}(c_6,s^{-1}(c_4),s^{-1}(b_4))\sigma(b_6,c_7,s^{-1}(c_3)s^{-1}(b_3))\sigma(a_1,b_7c_8,s^{-1}(c_2)s^{-1}(b_2))\\
             &\quad\sigma(a_2(b_8c_9),s^{-1}(c_1),s^{-1}(b_1))\a(s^{-1}(b_5))\a(s^{-1}(c_5))\\
             &\stackrel{(\ref{1c})(\ref{1e})}{=}\sigma(a_2(b_8c_9),s^{-1}(c_1),s^{-1}(b_1))\sigma(a_1,b_7c_8,s^{-1}(c_2)s^{-1}(b_2))\\
             &\quad\sigma(b_5,c_6,s^{-1}(c_4))\sigma^{-1}(b_6c_7,s^{-1}(c_3),s^{-1}(b_3))\a(s^{-1}(b_4))\a(s^{-1}(c_5)),
\end{align*}
which is equal to the right side of (\ref{2c}).
Moreover
\begin{align*}
&Y(a,b,c,d,e)\\
&\stackrel{(\ref{1c})}{=}\sigma^{-1}(c_1,d_1,e_1)\sigma(a_1,b_1,c_2(d_2e_2))\sigma(a_2b_2,c_3,d_3e_3)
            \sigma^{-1}(a_3,b_3,c_4)\sigma(a_4(b_4c_5),d_4,e_4)\\
            &\stackrel{(\ref{1a})}{=}\sigma(a_1,b_1,(c_1d_1)e_1)\sigma^{-1}(c_2,d_2,e_2)\sigma(a_2b_2,c_3,d_3e_3)
            \sigma((a_3b_3)c_4,d_4,e_4)\sigma^{-1}(a_4,b_4,c_5)\\
            &\stackrel{(\ref{1c})}{=}\sigma(a_1,b_1,(c_1d_1)e_1)\sigma(a_2b_2,c_2d_2,e_2)
            \sigma(a_3b_3,c_3,d_3)\sigma^{-1}(a_4,b_4,c_4).
\end{align*}

Then
\begin{align*}
&\om(Y)(a,b,c)\\
&=\sigma(a_1,b_4,(c_5\a(s^{-1}(c_4))s^{-1}(c_3))s^{-1}(b_2))\sigma(a_2b_5,c_6s^{-1}(c_2),s^{-1}(b_1))\\
             &\quad\sigma(a_3b_6,c_7,s^{-1}(c_1))\sigma^{-1}(a_4,b_7,c_8)\a(s^{-1}(b_3))\\
             &\stackrel{(\ref{1e})}{=}\sigma(a_1,b_3,s^{-1}(b_1))
             \sigma(a_2b_4,c_3,s^{-1}(c_1))\sigma^{-1}(a_3,b_5,c_4)\a(s^{-1}(b_3))\a(s^{-1}(c_2))\\
             &\stackrel{(\ref{1c})(\ref{1e})}{=}\sigma(a_1,b_3,s^{-1}(b_1))\sigma(b_4,c_4,s^{-1}(c_2))\sigma(a_2,b_5c_5,s^{-1}(c_1))\a(s^{-1}(b_2))\a(s^{-1}(c_3))\\
             &\stackrel{(\ref{1e})}{=}\sigma(a_1,b_3,s^{-1}(b_1))\sigma^{-1}(a_2,b_4,c_3s^{-1}(c_3))\sigma(b_5,c_6,s^{-1}(c_2))\\
             &\quad\sigma(a_3,b_6c_7,s^{-1}(c_1))\a(s^{-1}(b_2))\a(s^{-1}(c_4))\\
             &\stackrel{(\ref{1c})}{=}\sigma(a_1,b_3,s^{-1}(b_1))\sigma(a_2b_4,c_3,s^{-1}(c_1))\sigma^{-1}(a_3,b_5,c_4)
             \a(s^{-1}(b_2))\a(s^{-1}(c_2))\\
             &=\om(X)(a,b,c).
\end{align*}
The relation (\ref{2c}) is obtained. Since $H^{cop}$ is also a dual quasi-Hopf algebra, by (\ref{2c}) we have (\ref{2s}).
The proof is completed.
\end{proof}

\begin{lemma}
Define an element $U\in(H\o H)^*$ by
$$U(a,b)=g(a_1,b_1)q^R(s(b_2),s(a_2)),$$
for all $a,b\in H$. Then we have the relation
\begin{eqnarray}
&&(1)~U(a,b_1)s(b_2)=s(a_1b_1)U(a_2,b_2)a_3,\label{2d}\\
&&(2)~U(a,b_1c_1)U(b_2,c_2)=\sigma(a_1,b_1,c_1)\sigma(s((a_2b_2)c_2),a_4,c_4)U(a_3b_3,c_3),\label{2e}\\
&&(3)~p^L(s(a_1b_1),a_3)U(a_2,b_2)=p^R(a,b).\label{2z}
\end{eqnarray}
\end{lemma}

\begin{proof}
The verification of (\ref{2d}) is straightforward. We only prove (\ref{2e}) and (\ref{2z}).
For all $a,b,c\in H$,
\begin{align*}
&\sigma^{-1}(a_1,b_1,c_1)U(a_2,b_2c_2)U(b_3,c_3)\\
&=\sigma^{-1}(a_1,b_1,c_1)g(a_2,b_2c_2)q^R(s(b_3c_3),s(a_3)))g(b_4,c_4)q^R(s(c_5),s(b_5))\\
&\stackrel{(\ref{1h})}{=}\sigma^{-1}(a_1,b_1,c_1)g(a_2,b_2c_2)g(b_3,c_3)q^R(s(c_4)s(b_4),s(a_3)))q^R(s(c_5),s(b_5))\\
&\stackrel{(\ref{1j})}{=}g(a_1b_1,c_1)g(b_2,c_2)\sigma^{-1}(s(c_2),s(b_3),s(a_3))q^R(s(c_3)s(b_4),s(a_4)))q^R(s(c_4),s(b_5))\\
&\stackrel{(\ref{2c})}{=}g(a_1b_1,c_1)g(b_2,c_2)\sigma(s(c_2)(s(b_3)s(a_3)),a_6,b_6)f(a_5,b_5)q^R(s(c_3),s(b_4)s(a_4))\\
&=\sigma(s((a_1b_1)c_1),a_6,b_6)g(a_2b_2,c_2)g(b_3,c_3)f(a_4,b_4)q^R(s(c_3),s(a_3b_3))\\
&=\sigma(s((a_1b_1)c_1),a_4,b_4)g(a_2b_2,c_2)q^R(s(c_3),s(a_3b_3))\\
&=\sigma(s((a_1b_1)c_1),a_3,c_3)U(a_2b_2,c_2),
\end{align*}
which proves (\ref{2e}). And
\begin{align*}
&p^L(s(a_1b_1),a_3)U(a_2,b_2)\\
&=\sigma(a_1b_1,s(a_3b_3),a_6)\b(a_2b_2)g(a_4,b_4)q^R(s(b_5),s(a_5))\\
&=\sigma(a_1b_1,s(b_4)s(a_4),a_6)\b(a_2b_2)g(a_3,b_3)q^R(s(b_5),s(a_5))\\
&=\sigma(a_1b_1,s(b_4)s(a_4),a_8)\b(a_2b_2)g(a_3,b_3)\sigma(s(b_5),s(a_5),a_7)\a(a_6)\\
&=\sigma(a_1b_1,s(b_3)s(a_3),a_7)\chi(a_2,b_2)\sigma(s(b_4),s(a_4),a_6)\a(a_5)\\
&=\underline{\sigma(a_1b_1,s(b_7)s(a_6),a_{10})\sigma(a_2b_2,s(b_6),s(a_5))\sigma(s(b_8),s(a_7),a_9)}\\
&\quad\sigma^{-1}(a_3,b_3,s(b_5))\b(a_4)\b(b_4)\a(a_8)\\
&=\sigma(a_1b_1,s(b_7),s(a_6)a_{8})\sigma((a_2b_2)s(b_6),s(a_5),a_9)\sigma^{-1}(a_3,b_3,s(b_5))\b(a_4)\b(b_4)\a(a_7)\\
&=\sigma((a_1b_1)s(b_5),s(a_4),a_5)\sigma^{-1}(a_2,b_2,s(b_4))\b(a_3)\b(b_3)\a(a_4)\\
&=\sigma(a_2(b_2s(b_4)),s(a_4),a_6)\sigma^{-1}(a_1,b_1,s(b_5))\b(a_3)\b(b_3)\a(a_5)\\
&=\sigma(a_2,s(a_4),a_6)\sigma^{-1}(a_1,b_1,s(b_3))\b(a_3)\b(b_2)\a(a_5)\\
&=p^R(a,b).
\end{align*}
The proof is completed.
\end{proof}

\begin{definition}
Let $H$ be a dual quasi-Hopf algebra. A $k$-space $M$ is called a left-left Yetter-Drinfeld module if $M$ is a left $H$-comodule (denote the left coaction by $\l_M:M\rightarrow H\o M$, $m\mapsto m_{(-1)}\o m_{(0)}$) and $H$ acts on $M$ from the left (denote the left action by $h\cdot m$) such that for all $m\in M$ and $h\in H$ the following conditions hold:
\begin{eqnarray}
&&(1)~\sigma(h_1,g_1,m_{(-1)})\sigma((h_2g_2\cdot m_{(0)})_{(-1)},h_3,g_3)(h_2g_2\cdot m_{(0)})_{(0)}\nonumber\\
&&\quad =\sigma(h_1,(g_1\cdot m)_{(-1)},g_2)h_2\cdot(g_1\cdot m)_{(0)},\label{2h}\\
&&(2)~1_H\cdot m=m,\label{2i}\\
&&(3)~h_1m_{(-1)}\o h_2\cdot m_{(0)}=(h_1\cdot m)_{(-1)}h_2\o(h_1\cdot m)_{(0)},\label{2j}
\end{eqnarray}
for all $h,g\in H$ and $m\in M$.
The category of left-left Yetter-Drinfeld modules over $H$ is denoted by $_H^H\mathcal{YD}$ with the morphisms being left $H$-linear and left $H$-colinear.
\end{definition}

\begin{proposition}
Let $H$ be a dual quasi-Hopf algebra, $M\in\mathcal{^HM}$, and $\cdot:H\o M\rightarrow M$ a $k$-linear map satisfying (\ref{2h})--(\ref{2i}). Then (\ref{2j}) is equivalent to
\begin{align}
&(h\cdot m)_{(-1)}\o(h\cdot m)_{(0)}\nonumber\\
&=q^R((h_1m_{(-1)})_1,s(h_5))(h_1m_{(-1)})_2s(h_4)
\o p^R((h_2\cdot m_{(0)})_{(-1)},h_3)(h_2\cdot m_{(0)})_{(0)},\label{2n}
\end{align}
for all $h\in H,m\in M$.
\end{proposition}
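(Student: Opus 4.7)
The plan is to prove the two directions of the equivalence separately, each by direct calculation using the identities from Lemma~2.1 together with the dual quasi-Hopf axioms (\ref{1a})--(\ref{1j}).

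For $(\ref{2j})\Rightarrow(\ref{2n})$, I would start from the right-hand side of (\ref{2n}) and aim to collapse it to $(h\cdot m)_{(-1)}\otimes(h\cdot m)_{(0)}$. The central move is to apply (\ref{2j}) with its ``$h$'' played by the second component of a four-fold split of the outer $h$ (so that the sub-split $h_2\otimes h_3$ matches the ``$h_1\otimes h_2$'' of (\ref{2j})) and its ``$m$'' replaced by $m_{(0)}$; this produces the exchange $(h_2\cdot m_{(0)})_{(-1)}h_3\otimes(h_2\cdot m_{(0)})_{(0)} = h_2 m_{(0)(-1)}\otimes h_3\cdot m_{(0)(0)}$. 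Substituting into the second tensor slot of the RHS of (\ref{2n}), unfolding $p^R$ and $q^R$ according to their defining formulas, and using the cocycle identity (\ref{1c}) to regroup the $\sigma$-factors, one can then invoke the ``duality'' relation (\ref{2g}), namely $q^R(a_1,b_2)p^R(a_2 b_3, s^{-1}(b_1)) = \varepsilon(a)\varepsilon(b)$, together with the antipode axioms (\ref{1e})--(\ref{1f}), to cancel all remaining $p^R$, $q^R$, and $s\circ s^{-1}$ contractions. What survives is the left-hand side of (\ref{2n}).

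For $(\ref{2n})\Rightarrow(\ref{2j})$, I would substitute (\ref{2n}) into the left-hand side of (\ref{2j}), i.e.\ compute $(h_1\cdot m)_{(-1)}h_2\otimes(h_1\cdot m)_{(0)}$. Expanding $\Delta^{(4)}(h_1)$ and treating the ambient $h_2$ as a sixth coproduct component of $h$ (so $h$ splits as $h_1\otimes\cdots\otimes h_6$ with the old $h_2$ now labelled $h_6$), one sees the product $s(h_4)h_6$ standing next to the $q^R$-scalar. The key step is to apply (\ref{2a}) in the form $q^R(a_2,b)a_1 = (a_2 b_3)q^R(a_1,b_2)s^{-1}(b_1)$ to absorb $h_6$ into the $q^R$ factor; once the $\sigma$-terms are regrouped by (\ref{1c}), the antipode axiom $h_1\beta(h_2)s(h_3)=\beta(h)\cdot 1$ from (\ref{1e}), together with the counit, collapses the remaining $s$-contractions and leaves exactly $h_1 m_{(-1)}\otimes h_2\cdot m_{(0)}$.

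The main obstacle I anticipate is notational bookkeeping: once $p^R$ and $q^R$ are fully expanded, $\Delta(h)$ must be split into six or seven sub-components, and each $\sigma$-factor and antipode pairing must be matched with the correct instance of an identity from Lemma~2.1 at the right step. The underlying structural reason (\ref{2j}) and (\ref{2n}) carry the same information is that $p^R$ and $q^R$ are engineered to be convolution-pseudo-inverses via (\ref{2f})--(\ref{2g}); this is the essential input that makes both directions of the equivalence go through.
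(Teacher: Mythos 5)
Your overall plan---prove both implications by direct substitution, using the $p^R,q^R$ identities of Lemma~2.1---is the right general strategy, but the central move you propose for $(\ref{2j})\Rightarrow(\ref{2n})$ does not go through. You want to apply (\ref{2j}) in the instance $(h_2\cdot m_{(0)})_{(-1)}h_3\otimes(h_2\cdot m_{(0)})_{(0)}=h_2m_{(0)(-1)}\otimes h_3\cdot m_{(0)(0)}$ and ``substitute it into the second tensor slot'' of the right-hand side of (\ref{2n}). But (\ref{2j}) is an identity in $H\otimes M$ about the \emph{product} $(h_2\cdot m_{(0)})_{(-1)}h_3$, and that product never occurs in (\ref{2n}): there $(h_2\cdot m_{(0)})_{(-1)}$ and $h_3$ enter as two \emph{separate} arguments of the functional $p^R$. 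An equality $x_1y_1\otimes z=u_1v_1\otimes w$ does not let you replace $\phi(x_1,y_1)\,z$ by $\phi(u_1,v_1)\,w$ for an arbitrary $\phi\in(H\otimes H)^*$; being able to disentangle the two factors is exactly what the proposition asserts, so this step begs the question. The correct move is to apply (\ref{2j}) in the \emph{other} direction, to the pattern $h_1m_{(-1)}\otimes h_2\cdot m_{(0)}$, which genuinely is a sub-expression of the right-hand side of (\ref{2n}) (the latter is obtained from it by applying $\Delta$ to the first leg, the coaction to the second, and then $q^R$, $p^R$). After that one still needs the first identity of (\ref{2a}) to move the $(h_1\cdot m)_{(-1)}$-legs into position, and the contraction that finishes is the first identity of (\ref{2f}), namely $q^R(a_1b_1,s(b_3))p^R(a_2,b_2)=\varepsilon(a)\varepsilon(b)$---not (\ref{2g}), whose argument pattern (product inside $p^R$, an $s^{-1}$) does not match. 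You never invoke (\ref{2a}) in this direction, and it is indispensable.

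Your plan for $(\ref{2n})\Rightarrow(\ref{2j})$ starts correctly (substitute (\ref{2n}) into $(h_1\cdot m)_{(-1)}h_2\otimes(h_1\cdot m)_{(0)}$ and absorb the trailing $h_6$ via the second identity of (\ref{2a})), but the endgame is wrong. After the (\ref{2a}) step and coassociativity one is left with $q^R(h_2m_{(0)(-1)},s(h_5))h_1m_{(-1)}\otimes p^R((h_3\cdot m_{(0)(0)})_{(-1)},h_4)(h_3\cdot m_{(0)(0)})_{(0)}$, and this $q^R$/$p^R$ pair cannot be contracted by (\ref{1e}) and counit arguments alone: their first arguments involve $m_{(0)(-1)}$ and $(h_3\cdot m_{(0)(0)})_{(-1)}$, which are unrelated without further input. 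What closes the argument is recognizing this expression as the right-hand side of (\ref{2m})---the specialization of the hypothesis (\ref{2n}) obtained by applying $\varepsilon$ to the first tensor factor---applied to $(h_2,m_{(0)})$; i.e., the hypothesis must be used a second time. So the essential inputs are not only the pseudo-inverse relations (\ref{2f})--(\ref{2g}): one direction needs (\ref{2a}) together with (\ref{2f}), and the other needs (\ref{2a}) together with a second application of (\ref{2n}) itself.
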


\begin{proof}
The proof is similar to that of \cite{Ba2}.
%
%
\end{proof}

\begin{proposition}
Let $H$ be a dual quasi-Hopf algebra. Then $H$ is an object in the category $_H^H{\mathcal{YD}}$ with the structures:
\begin{align}
&\l(h)=h_1s(h_3)\o h_2,\\
&h\triangleright h'=\sigma(h_1,h'_1,s(h'_7))\sigma(h_2h'_2,s(h'_5)s(h_5),h_7)g(h_4,h'_4)q^R(s(h'_6),s(h_6))h_3h'_3,
\end{align}
for all $h,h'\in H$.
\end{proposition}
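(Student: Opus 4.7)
The plan is to check the three defining conditions (\ref{2h})--(\ref{2j}) of a left-left Yetter-Drinfeld module for the prescribed coaction $\l$ and action $\triangleright$, relying systematically on the identities from Lemmas 2.1 and 2.2. The coaction check is routine: the counit axiom $\varepsilon(h_1 s(h_3)) h_2 = h$ is immediate from $\varepsilon \circ s = \varepsilon$ and coassociativity, and for coassociativity of $\l$ one uses that $\Delta$ is an algebra morphism (a property of any dual quasi-bialgebra) and $s$ is a coalgebra antimorphism, after which both $(\Delta \o \mathrm{id})\l(h)$ and $(\mathrm{id} \o \l)\l(h)$ reduce to the common expression $h_1 s(h_5) \o h_2 s(h_4) \o h_3$ in the notation $\Delta^4(h) = h_1 \o \cdots \o h_5$.

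For the unit axiom (\ref{2i}), substituting $h = 1$ forces every Sweedler piece $h_i = 1$, and the normalizations $\sigma(1, a, b) = \sigma(a, 1, b) = \sigma(a, b, 1) = \varepsilon(a)\varepsilon(b)$ from (\ref{1d}) and (\ref{1g}), together with the easy identities $q^R(a, 1) = \a(1)\varepsilon(a)$ and $g(1, a) = \b(1)\varepsilon(a)$ (the latter being a short derivation from the definitions of $g$ and $\chi$ that invokes (\ref{1f})), collapse the formula to $\a(1)\b(1)\, h' = h'$ using $\a(1)\b(1) = 1$.

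The bulk of the work is to verify the quasi-associativity (\ref{2h}) and the compatibility (\ref{2j}); for the latter I would use the equivalent form (\ref{2n}) supplied by Proposition 2.5. In each case the strategy is the same: expand both sides using the explicit definitions of $\l$ and $\triangleright$, obtaining polynomial expressions in $\sigma, f, g, p^R, q^R$ with many Sweedler indices (up to seven on each of $h, g, m$), and then bring both sides to a common normal form by repeatedly applying the 3-cocycle identity (\ref{1c}) to shuffle $\sigma$-factors; the identities (\ref{2c}) and (\ref{2s}) of Lemma 2.1 to contract $q^R \cdot q^R$ and $p^R \cdot p^R$ pairs; the identities (\ref{2d}) and (\ref{2e}) of Lemma 2.2 once the combination $U(a, b) = g(a_1, b_1) q^R(s(b_2), s(a_2))$ has been recognized inside the expression; and the twist identity (\ref{1h}) to move $s$ across products.

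The hard part will be the sheer combinatorial size of (\ref{2h}): with three factors each carrying up to seven Sweedler indices, bookkeeping is the principal difficulty, and one must choose carefully the order in which to deploy (\ref{1c}), the four identities of Lemma 2.1, the two identities of Lemma 2.2, and the twist (\ref{1h}). The conceptual content, however, is transparent: the $\sigma$- and $g$-factors built into the definition of $\triangleright$ are engineered precisely so that the associator shuffles on the two sides of (\ref{2h}), and likewise the two sides of (\ref{2n}), collapse to the same expression after the $p^R$, $q^R$, and $U$ contractions. Thus no new identity beyond those of Lemmas 2.1 and 2.2 should be needed; the proof is an organized bookkeeping exercise whose outcome is forced by the structure of the formulas.
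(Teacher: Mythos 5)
Your plan coincides with the paper's proof in all essentials: the paper likewise treats the comodule and unit axioms as immediate, rewrites the action as $h\triangleright h'=\sigma(h_1,h'_1,s(h'_6))\sigma(h_2h'_2,s(h_4h'_4),h_6)U(h_5,h'_5)h_3h'_3$ by recognizing the element $U$ of Lemma 2.2 (exactly the step you single out), and then resolves (\ref{2h}) and the compatibility condition by shuffling $\sigma$-factors with (\ref{1a}), (\ref{1c}), (\ref{2d}) and (\ref{2e}). Your auxiliary claims check out: $g(1,a)=\b(1)\varepsilon(a)$ does follow from (\ref{1f}), and $\a(1)\b(1)=1$ closes the unit axiom. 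Two caveats. First, for the compatibility condition the paper verifies (\ref{2j}) directly --- since $\l(h)=h_1s(h_3)\o h_2$ is explicit, $(a_1\triangleright h)_{(-1)}a_2\o(a_1\triangleright h)_{(0)}=a_1h_{(-1)}\o a_2\triangleright h_{(0)}$ falls out in a few lines from (\ref{1a}) and (\ref{2d}) alone; your detour through (\ref{2n}) is logically admissible (you establish (\ref{2h}) and (\ref{2i}) first, so Proposition 2.5 applies) but would force extra $p^R,q^R$ contractions for no benefit. Second, and more seriously, for (\ref{2h}) your write-up stops at the level of strategy. That verification is the entire substance of the proposition: it is a long chain in which the order of moves is not arbitrary --- one must first use (\ref{1c}) and (\ref{1a}) to bring the left-hand side into the precise configuration $\sigma(a,b,h)\,\sigma(s((ab)h),\cdot,\cdot)\,U(ab,h)$ that (\ref{2e}) can split into $U(a,bh)U(b,h)$, and only then apply (\ref{2d}) twice to transport the resulting $U$-factors past the antipodes. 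Asserting that both sides ``collapse to a common normal form'' without exhibiting this chain leaves the decisive step unproved; everything else in your outline is correct and matches the paper.
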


\begin{proof}
It is easy to see that $H$ is a left $H$-comodule via $\l$. By the element $U$ defined in Lemma 2.2, we obtain that
\begin{equation}
h\triangleright h'=\sigma(h_1,h'_1,s(h'_6))\sigma(h_2h'_2,s(h_4h'_4),h_6)U(h_5,h'_5)h_3h'_3.
\end{equation}
For all $a,b,h\in H$, we have
\begin{align*}
&\sigma(a_1,b_1,h_1s(h_3))\sigma((a_2b_2\triangleright h_2)_{(-1)},a_3,b_3)(a_2b_2\triangleright h_2)_{(0)}\\
&=\sigma(a_1,b_1,h_1s(h_8))\sigma(((a_4b_4)h_4)_{(-1)},a_8,b_8)\sigma(a_2b_2,h_2,s(h_7))\\
&\quad\sigma((a_3b_3)h_3,s((a_5b_5)h_5),a_7b_7)U(a_6b_6,h_6)((a_4b_4)h_4)_{(0)}\\
&=\sigma(a_1,b_1,h_1s(h_{10}))\sigma(a_2b_2,h_2,s(h_9))\\
&\quad\sigma((a_3b_3)h_3,s((a_7b_7)h_7),a_9b_9)\sigma(((a_4b_4)h_4)s((a_6b_6)h_6),a_{10},b_{10})U(a_8b_8,h_8)(a_5b_5)h_5\\
&\stackrel{(\ref{1c})}{=}\sigma(a_1,b_1,h_1s(h_{11}))\sigma(a_2b_2,h_2,s(h_{10}))\sigma(s((a_8b_8)h_8),a_{10},b_{10})\\
&\quad\sigma((a_3b_3)h_3,s((a_7b_7)h_7)a_{11},b_{11})\sigma((a_4b_4)h_4,s((a_6b_6)h_6),a_{12})U(a_9b_9,h_9)(a_5b_5)h_5\\
&\stackrel{(\ref{1a})}{=}\sigma(a_1,b_1,h_1s(h_{13}))\sigma(a_2b_2,h_2,s(h_{12}))\sigma(a_{9},b_{9},h_{9})\sigma(s((a_{10}b_{10})h_{10}),a_{12},b_{12})U(a_{11}b_{11},h_{11})\\
&\quad\sigma^{-1}(a_3,b_3,h_3)\sigma(a_4(b_4h_4),s(a_8(b_8h_8))a_{13},b_{13})\sigma(a_5(b_5h_5),s(a_7(b_7h_7)),a_{14})a_6(b_6h_6)\\
&\stackrel{(\ref{2e})}{=}\sigma(a_1,b_1,h_1s(h_{12}))\sigma(a_2b_2,h_2,s(h_{11}))U(a_{9},b_{9}h_{9})U(b_{10},h_{10})\\
&\quad\sigma^{-1}(a_3,b_3,h_3)\sigma(a_4(b_4h_4),s(a_8(b_8h_8))a_{10},b_{11})\sigma(a_5(b_5h_5),s(a_7(b_7h_7)),a_{11})a_6(b_6h_6)\\
&\stackrel{(\ref{1c})}{=}\sigma(b_1,h_1,s(h_{12}))\sigma(a_1,b_2h_2,s(h_{11}))U(a_{7},b_{8}h_{9})U(b_{9},h_{10})\\
&\quad\sigma(a_2(b_3h_4),s(a_6(b_7h_8))a_{8},b_{10})\sigma(a_3(b_4h_5),s(a_5(b_6h_7)),a_{9})a_4(b_5h_6)\\
&\stackrel{(\ref{2d})}{=}\sigma(b_1,h_1,s(h_{11}))\sigma(a_1,b_2h_2,s(h_{10}))U(a_6,b_7h_7)U(b_{9},h_{9})\\
&\quad\sigma(a_2(b_3h_3),s(b_8h_8),b_{10})\sigma(a_3(b_4h_4),s(a_5(b_6h_6)),a_{7})a_4(b_5h_5)\\
&\stackrel{(\ref{2d})}{=}\sigma(b_{1},h_{1},s(h_{11}))\sigma(a_1,b_{2}h_2,s(h_{10}))\sigma(a_2(b_{3}h_3),s(b_{8}h_{8}),b_{10})U(b_{9},h_{9})\\
&\sigma(a_3(b_{4}h_4),s(a_5(b_{6}h_6)),a_7)U(a_6,b_{7}h_7)a_4(b_{5}h_5)\\
&=\sigma(b_{1},h_{1},s(h_{11}))\sigma(a_1,b_{2}h_2,s(b_{9}h_{9})b_{11})\sigma(a_2(b_{3}h_3),s(b_{8}h_{8}),b_{12})U(b_{10},h_{10})\\
&\sigma(a_3(b_{4}h_4),s(a_5(b_{6}h_6)),a_7)U(a_6,b_{7}h_7)a_4(b_{5}h_5)\\
&\stackrel{(\ref{1c})}{=}\sigma(b_{2}h_2,s(b_{11}h_{11}),b_{13})\sigma(a_1,(b_{3}h_3)s(b_{10}h_{10}),b_{14})\sigma(a_2,b_{4}h_4,s(b_{9}h_{9}))U(b_{12},h_{12})\\
&\sigma(b_{1},h_{1},s(h_{13}))\sigma(a_3(b_{5}h_5),s(a_5(b_{7}h_7)),a_7)U(a_6,b_{8}h_8)a_4(b_{6}h_6)\\
&=\sigma(a_1,(b_1\triangleright h)_{(-1)},b_2)a_2\triangleright (b_1\triangleright h)_{(0)}.
\end{align*}
Thus the relation (\ref{2h}) is satisfied.
And
\begin{align*}
&a_1h_{(-1)}\o a_2\triangleright h_{(0)}=a_1(h_1s(h_3))\o a_2\triangleright h_2\\
   &=a_1(h_1s(h_8))\sigma(a_2,h_2,s(h_7))\sigma(a_3h_3,s(a_5h_5),a_7)U(a_6,h_6)\o a_4h_4\\
   &\stackrel{(\ref{1a})}{=}\sigma(a_1,h_1,s(h_8))(a_2h_2)s(h_7)\sigma(a_3h_3,s(a_5h_5),a_7)U(a_6,h_6)\o a_4h_4\\
   &\stackrel{(\ref{2d})}{=}\sigma(a_1,h_1,s(h_8))\sigma(a_3h_3,s(a_5h_5),a_9)(a_2h_2)[s(a_6h_6)a_8]U(a_7,h_7)\o a_4h_4\\
   &\stackrel{(\ref{1a})}{=}\sigma(a_1,h_1,s(h_8))\sigma(a_2h_2,s(a_6h_6),a_8)[(a_3h_3)s(a_5h_5)]U(a_7,h_7)a_9\o a_4h_4\\
   &=(a_1\triangleright h)_{(-1)}a_2\o (a_1\triangleright h)_{(0)}.
\end{align*}
The relation (\ref{2j}) is satisfied. Obviously $1_H\triangleright h=h$.
Hence $H$ is an object in the category $_H^H{\mathcal{YD}}$.
The proof is completed.
\end{proof}

\begin{example}
Let $(H,\varphi)$ be a coquasitriangular dual quasi-Hopf algebra. Then any left $H$-comodule $M$ is a left Yetter-Drinfeld module over $H$. Indeed for all $g\in H,m\in M$, define
$$g\cdot m=\varphi(m_{(-1)},g)m_{(0)}.$$
Then for the relation (\ref{2h})
\begin{align*}
&\sigma(h_1,g_1,m_{(-1)})\sigma((h_2g_2\cdot m_{(0)})_{(-1)},h_3,g_3)(h_2g_2\cdot m_{(0)})_{(0)}\\
&=\sigma(h_1,g_1,m_{(-1)1})\sigma( m_{(-1)3},h_3,g_3)\varphi(m_{(-1)2},h_2g_2) m_{(0)}\\
&=\sigma(h_1,g_1,m_{(-1)1})\sigma( m_{(-1)7},h_6,g_6)\sigma^{-1}(h_2,g_2,m_{(-1)2})\varphi(m_{(-1)3},g_3)\\
&~~~~\sigma(h_3,m_{(-1)4},g_4)\varphi(m_{(-1)5},h_4)\sigma^{-1}(m_{(-1)6},h_5,g_5)m_{(0)}\\
&=\varphi(m_{(-1)1},g_1)\sigma(h_1,m_{(-1)2},g_2)\varphi(m_{(-1)3},h_2)m_{(0)}\\
&=\sigma(h_1,(g_1\cdot m)_{(-1)},g_2)h_2\cdot(g_1\cdot m)_{(0)}.
\end{align*}
And for the relation (\ref{2j})
\begin{align*}
&h_1m_{(-1)}\o h_2\cdot m_{(0)}=h_1m_{(-1)1}\o m_{(0)}\varphi(m_{(-1)2},h_2)\\
&=\varphi(m_{(-1)1},h_1)m_{(-1)2}h_2\o m_{(0)}=(h_1\cdot m)_{(-1)}h_2\o(h_1\cdot m)_{(0)}.
\end{align*}
\end{example}

\begin{proposition}\cite{Ba2}
Let $H$ be a dual quasi-bialgebra and $\mathcal{^HM}$ the category of left $H$-comodules. Then we have category isomorphism $_H^H\mathcal{YD}\cong \mathcal{W}_r(^H\mathcal{M})$.
\end{proposition}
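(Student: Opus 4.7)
The plan is to build mutually inverse functors $F\colon{_H^H\mathcal{YD}}\to\mathcal{W}_r(\mathcal{^HM})$ and $G\colon\mathcal{W}_r(\mathcal{^HM})\to{_H^H\mathcal{YD}}$, each acting as the identity on underlying vector spaces. On a YD module $M$, $F$ produces $(M,t_{-,M})$ with
\[
t_{X,M}(x\otimes m):=x_{(-1)}\cdot m\otimes x_{(0)}\qquad(X\in\mathcal{^HM},\ x\in X,\ m\in M);
\]
in the other direction, for $(V,t_{-,V})\in\mathcal{W}_r(\mathcal{^HM})$, $G$ endows $V$ with the action
\[
h\cdot v:=(V\otimes\varepsilon)\bigl(t_{H,V}(h\otimes v)\bigr),
\]
where $H$ carries the regular left coaction $\Delta$.

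For $F(M)\in\mathcal{W}_r(\mathcal{^HM})$, four things need to be checked. First, $H$-colinearity of each $t_{X,M}$ with respect to the diagonal coactions on $X\otimes M$ and $M\otimes X$ reduces, after unwinding both sides and using coassociativity of $\lambda_X$, exactly to (2.10). Second, naturality in $X$ is immediate from the colinearity of the argument map. Third, $t_{k,M}=\mathrm{id}_M$ follows from (2.8) together with the trivial coaction on $k$. The main verification is the hexagon (1.15) applied to $x\otimes(y\otimes m)$: unfolded via
\[
a^{-1}_{U,W,Z}(u\otimes(w\otimes z))=\sigma(u_{(-1)},w_{(-1)},z_{(-1)})(u_{(0)}\otimes w_{(0)})\otimes z_{(0)},
\]
both sides collapse, after cancelling the common tail $x_{(0)}\otimes y_{(0)}$ and matching iterated coactions with Sweedler decompositions by coassociativity, to the YD axiom (2.7) evaluated at $h=x_{(-1)}$, $g=y_{(-1)}$.

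For $G$, the key step is to show that every $(V,t)\in\mathcal{W}_r(\mathcal{^HM})$ is reconstructed from its induced action by $t_{X,V}(x\otimes v)=x_{(-1)}\cdot v\otimes x_{(0)}$. This goes in two stages. First, for a trivial $H$-comodule $W$ (coaction $w\mapsto 1\otimes w$), naturality of $t$ along the colinear maps $k\to W$, $1\mapsto w$, combined with $t_{k,V}=\mathrm{id}$, forces $t_{W,V}(w\otimes v)=v\otimes w$. Next, for a general $X$, apply naturality of $t_{-,V}$ along the colinear coaction map $\lambda_X\colon X\to H\otimes X$ (target with first-factor coaction, i.e.\ equal to the tensor product $H\otimes X^{\mathrm{triv}}$); the hexagon (1.15) applied to $H\otimes X^{\mathrm{triv}}$ collapses, since all reassociator factors involving $X^{\mathrm{triv}}$ trivialise by (1.4), into $t_{H\otimes X^{\mathrm{triv}},V}((h\otimes x)\otimes v)=t_{H,V}(h\otimes v)\otimes x$, and projecting by $V\otimes\varepsilon\otimes X$ yields the desired formula. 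Granted this formula, the YD axioms become direct consequences: unitality $1_H\cdot v=v$ from $t_{H,V}(1\otimes v)=v\otimes 1$; compatibility (2.10) from $H$-colinearity of $t_{H,V}$; and the YD condition (2.7) from the hexagon (1.15) at $X=Y=H$ projected by $V\otimes\varepsilon\otimes\varepsilon$.

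Finally, the two compositions are identities. $G\circ F=\mathrm{id}$ is immediate since $(V\otimes\varepsilon)(h_1\cdot m\otimes h_2)=h\cdot m$, and $F\circ G=\mathrm{id}$ is the formula established above. On morphisms both functors act as the identity on underlying linear maps, and the matching of the morphism conditions is routine: an $H$-linear $H$-colinear $f\colon M\to M'$ trivially intertwines $t_{X,-}$, while conversely the intertwining condition at $X=H$ composed with $V'\otimes\varepsilon$ recovers $H$-linearity. The main technical obstacle is the hexagon verification in the forward direction, where three copies of $\sigma$ must be carefully re-arranged to match the precise shape of (2.7); a secondary but more subtle point is the naturality-plus-hexagon reduction to the formula for $t_{X,V}$ in the inverse direction.
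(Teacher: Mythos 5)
Your proposal is correct, and its overall architecture coincides with the paper's: mutually inverse functors that are the identity on underlying vector spaces, the action recovered as $h\cdot v=(V\otimes\varepsilon)\,t_{H,V}(h\otimes v)$, the half-braiding given by $t_{X,V}(x\otimes v)=x_{(-1)}\cdot v\otimes x_{(0)}$, and the translation of the weak-center axioms (unit, colinearity, hexagon) into the three Yetter--Drinfeld axioms. The one place where you genuinely diverge is the reconstruction step, i.e.\ the proof that an arbitrary $(V,t_{-,V})$ in the weak right center necessarily has $t_{X,V}$ of the stated form. The paper obtains this from naturality alone: for each $\varphi\in X^*$ the map $\widetilde{\varphi}:X\rightarrow H$, $\widetilde{\varphi}(x)=\varphi(x_{(0)})x_{(-1)}$, is left $H$-colinear, and naturality along all such maps followed by $\mathrm{id}\otimes\varepsilon$ pins down $t_{X,V}$ because linear functionals separate points. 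You instead handle trivial comodules first, then apply naturality along the colinear map $\lambda_X:X\rightarrow H\otimes X^{\mathrm{triv}}$ and use the hexagon to split $t_{H\otimes X^{\mathrm{triv}},V}$ as $t_{H,V}$ tensored with the flip on the trivial leg. Both arguments are valid (I checked that $\lambda_X$ is indeed colinear into the tensor-product comodule $H\otimes X^{\mathrm{triv}}$ and that all reassociator factors on the trivially coacted leg die); the paper's version is shorter and shows the formula is forced by naturality alone, whereas yours consumes the hexagon axiom at this stage but avoids the auxiliary maps $\widetilde{\varphi}$. A cosmetic point: several of your equation references are off against the paper's numbering --- the right-weak-center hexagon is (\ref{1k}), not the left one; the relevant YD axioms are (\ref{2h})--(\ref{2j}); and trivialising reassociator factors on a trivially coacted tensorand uses (\ref{1g}) as well as (\ref{1d}) --- but the intended identities are clear from context.
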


Using Proposition 2.7, we could give the monoidal structure of $_H^H\mathcal{YD}$ explicitly. The action of $H$ on the tensor product $M\o N$ of two left-left Yetter-Drinfeld modules $M$ and $N$ is given by
\begin{align}
h\cdot(m\o n)=&\sigma(h_1,m_{(-1)},n_{(-1)1})\sigma^{-1}((h_2\cdot m_{(0)})_{(-1)1},h_3,n_{(-1)2})\nonumber\\
&\sigma((h_2\cdot m_{(0)})_{(-1)2},(h_4\cdot n_{(0)})_{(-1)},h_5)(h_2\cdot m_{(0)})_{(0)}\o (h_4\cdot n_{(0)})_{(0)},
\end{align}
for all $m\in M,n\in N$. The braiding is given by
\begin{equation}
c_{_{N,M}}(n\o m)=n_{(-1)}\cdot m\o n_{(0)}.
\end{equation}
Furthermore we have the following result.

\begin{theorem}
The braiding $c$ is invertible with the inverse $c^{-1}_{_{N,M}}: M\o N\rightarrow N\o M$ given by
\begin{align}
c^{-1}_{_{N,M}}(m\o n)=&q^L(s^{-1}(n_{(-1)6}),m_{(-1)1}n_{(-1)1})\sigma(s^{-1}(n_{(-1)5}),m_{(-1)2},n_{(-1)2})\nonumber\\
p^R((s^{-1}&(n_{(-1)4})\cdot m_{(0)})_{(-1)},s^{-1}(n_{(-1)3}))n_{(0)}\o(s^{-1}(n_{(-1)4})\cdot m_{(0)})_{(0)}.
\end{align}
\end{theorem}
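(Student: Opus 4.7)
The plan is to leverage Proposition 2.7, which already supplies a bijective correspondence between objects of $\mathcal{W}_r(\mathcal{^HM})$ and those of $_H^H\mathcal{YD}$ via the half-braiding $c_{_{N,M}}(n \otimes m) = n_{(-1)} \cdot m \otimes n_{(0)}$. What remains to be shown is that every such $c_{_{N,M}}$ is invertible with explicit inverse of the form stated. Once this is done, every object of the weak right center lies in the full right center, and the identification with $\mathcal{Z}_l(\mathcal{^HM})$ follows from the standard equivalence between the two centers of a monoidal category obtained by reversing the direction of the half-braiding.

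Write $\widetilde{c}^{\,-1}_{_{N,M}}$ for the map on the right-hand side of the claimed formula. I would verify separately that $c_{_{N,M}} \circ \widetilde{c}^{\,-1}_{_{N,M}} = \mathrm{id}_{M \otimes N}$ and $\widetilde{c}^{\,-1}_{_{N,M}} \circ c_{_{N,M}} = \mathrm{id}_{N \otimes M}$. Both composites force one to expand the left $H$-coaction on an element of the form $h \cdot m$, which is exactly what formula (\ref{2n}) of Proposition 2.5 supplies in terms of $p^R$ and $q^R$. Substituting (\ref{2n}) converts each composition into a large product of values of $\sigma$, $\alpha$, $\beta$, $p^R$, $q^R$ and $q^L$ evaluated on iterated comultiplications of $m_{(-1)}$ and $n_{(-1)}$, and the task is then to collapse this product.

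For $c_{_{N,M}} \circ \widetilde{c}^{\,-1}_{_{N,M}}$, after expansion the $q^L$-$p^R$ block attached to the chain $s^{-1}(n_{(-1)3\ldots 6})$ is eliminated using (\ref{2f})--(\ref{2g}) of Lemma 2.1, which are precisely the two-sided cancellation rules linking $p^R$ and $q^L$ in the presence of the antipode. The surviving reassociator $\sigma(s^{-1}(n_{(-1)5}),m_{(-1)2},n_{(-1)2})$ is then cleared using the $3$-cocycle identity (\ref{1c}) together with the normalization (\ref{1d}). For the reverse composition $\widetilde{c}^{\,-1}_{_{N,M}} \circ c_{_{N,M}}$, I would first apply the Yetter-Drinfeld compatibility (\ref{2j}) to commute the coaction on $n_{(-1)} \cdot m$ past the action by $s^{-1}(n_{(-1)i})$, after which the same pattern of $p^R$-$q^L$ cancellations applies.

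The main obstacle will be bookkeeping rather than conceptual. After full expansion each composite involves roughly eight successive comultiplications of $n_{(-1)}$ together with several of $m_{(-1)}$, and the art lies in regrouping the $\sigma$-chains so that Lemma 2.1 can be applied in an orderly cascade. The asymmetric placement of the two tensor-legs in the formula (the $m$-entry is routed through an $s^{-1}$-twisted action while the $n$-entry passes through untouched) is precisely what the $s^{-1}(n_{(-1)3\ldots 6})$-chain is engineered to accommodate. Once the indices are aligned, the cancellation is forced by the identities of Lemmas 2.1 and 2.2 and requires no new structural input.
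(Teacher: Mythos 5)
Your overall strategy --- reduce the theorem to showing that the displayed formula is a two-sided inverse of $c_{_{N,M}}(n\o m)=n_{(-1)}\cdot m\o n_{(0)}$, and verify the two composites by direct computation --- is the same as the paper's. But the mechanism you propose for closing the computation has a genuine gap. The step that actually makes $c^{-1}_{_{N,M}}\circ c_{_{N,M}}=\mathrm{id}$ work is not a cancellation from Lemma 2.1 alone: after applying (\ref{2j}) (as you do) one is left with the nested action $s^{-1}(n_{(-1)j})\cdot\bigl(n_{(-1)i}\cdot m_{(0)}\bigr)$, and no identity in Lemma 2.1 or Lemma 2.2 removes it. The paper invokes the quasi-associativity axiom (\ref{2h}) of the Yetter--Drinfeld structure to merge the two actions into a single action by the product $s^{-1}(n_{(-1)j})n_{(-1)i}$, and then the antipode axiom (\ref{1e}) (fed by the $\beta$ hidden inside $p^R$) collapses that product to $1$, so that (\ref{2i}) eliminates the action entirely. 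Only after this do the leftover $\sigma$ and $\beta$ factors reassemble into $p^L$, and the final cancellation is the $q^L$--$p^L$ identity in (\ref{2g}) --- not a $p^R$--$q^L$ rule. Your closing claim that the cancellation ``is forced by the identities of Lemmas 2.1 and 2.2 and requires no new structural input'' is therefore false as stated: axioms (\ref{2h}), (\ref{2i}) and (\ref{1e}) are indispensable structural input, and Lemma 2.2 is not used here at all.

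Two smaller points. First, routing the expansion of $(h\cdot m)_{(-1)}\o(h\cdot m)_{(0)}$ through formula (\ref{2n}) is an unnecessary detour: (\ref{2n}) introduces four extra Sweedler legs on $h$ together with additional $q^R$ and $p^R$ factors that you would then have to cancel again; the paper uses (\ref{2j}) together with coassociativity of the coaction, which is what the computation actually needs. Second, for the composite $c_{_{N,M}}\circ c^{-1}_{_{N,M}}$ (which the paper leaves as ``similarly'') your appeal to (\ref{1c}) and (\ref{1d}) to clear the surviving reassociator is too vague to assess; there too the essential move is merging $n_{(0)(-1)}\cdot\bigl(s^{-1}(n_{(-1)4})\cdot m_{(0)}\bigr)$ via (\ref{2h}) and killing the resulting product with the antipode axioms. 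I would rewrite the plan around the chain (\ref{2j}) $\to$ (\ref{2h}) $\to$ (\ref{1e})/(\ref{2i}) $\to$ (\ref{2g}) before attempting the bookkeeping.
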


\begin{proof}
For all $m\in M,n\in N$,
\begin{align*}
&c^{-1}_{_{N,M}}(c_{_{N,M}}(n\o m))=c^{-1}_{_{N,M}}(n_{(-1)}\cdot m\o n_{(0)})\\
&=q^L(s^{-1}(n_{(-1)7}),(n_{(-1)1}\cdot m)_{(-1)1}n_{(-1)2})\sigma(s^{-1}(n_{(-1)6}),(n_{(-1)1}\cdot m)_{(-1)2},n_{(-1)3})\\
&\quad p^R((s^{-1}(n_{(-1)5})\cdot (n_{(-1)1}\cdot m)_{(0)})_{(-1)},s^{-1}(n_{(-1)4}))n_{(0)}\o(s^{-1}(n_{(-1)5})\cdot (n_{(-1)1}\cdot m)_{(0)})_{(0)}\\
&=q^L(s^{-1}(n_{(-1)7}),(n_{(-1)1}\cdot m)_{(-1)}n_{(-1)2})\sigma(s^{-1}(n_{(-1)6}),(n_{(-1)1}\cdot m)_{(0)(-1)},n_{(-1)3})\\
&\quad p^R((s^{-1}(n_{(-1)5})\cdot (n_{(-1)1}\cdot m)_{(0)(0)})_{(-1)},s^{-1}(n_{(-1)4}))n_{(0)}\o(s^{-1}(n_{(-1)5})\cdot (n_{(-1)1}\cdot m)_{(0)(0)})_{(0)}\\
&\stackrel{(\ref{2j})}{=}q^L(s^{-1}(n_{(-1)7}),n_{(-1)1}m_{(-1)})\sigma(s^{-1}(n_{(-1)6}),(n_{(-1)2}\cdot m_{(0)})_{(-1)},n_{(-1)3})\\
&\quad p^R((s^{-1}(n_{(-1)5})\cdot (n_{(-1)2}\cdot m_{(0)})_{(0)})_{(-1)},s^{-1}(n_{(-1)4}))n_{(0)}\o(s^{-1}(n_{(-1)5})\cdot (n_{(-1)2}\cdot m_{(0)})_{(0)})_{(0)}\\
&\stackrel{(\ref{2h})}{=}q^L(s^{-1}(n_{(-1)9}),n_{(-1)1}m_{(-1)1})\\
&\quad\sigma(s^{-1}(n_{(-1)8}),n_{(-1)2},m_{(-1)2})\sigma((s^{-1}(n_{(-1)7})n_{(-1)3}\cdot m_{(0)})_{(-1)}),s^{-1}(n_{(-1)6}),n_{(-1)4})\\
&\quad p^R((s^{-1}(n_{(-1)7})n_{(-1)3}\cdot m_{(0)})_{(0)(-1)},s^{-1}(n_{(-1)5}))n_{(0)}\o(s^{-1}(n_{(-1)7})n_{(-1)3}\cdot m_{(0)})_{(0)(0)}\\
&=q^L(s^{-1}(n_{(-1)7}),n_{(-1)1}m_{(-1)1})\sigma(s^{-1}(n_{(-1)6}),n_{(-1)2},m_{(-1)2})\b(s^{-1}(n_{(-1)4}))\\
&\quad n_{(0)}\o s^{-1}(n_{(-1)5})n_{(-1)3}\cdot m_{(0)}\\
&\stackrel{(\ref{1e})}{=}q^L(s^{-1}(n_{(-1)5}),n_{(-1)1}m_{(-1)1})\sigma(s^{-1}(n_{(-1)4}),n_{(-1)2},m_{(-1)2})\b(s^{-1}(n_{(-1)3}))\\
&\quad n_{(0)}\o  m_{(0)}\\
&=q^L(s^{-1}(n_{(-1)3}),n_{(-1)1}m_{(-1)1})p^L(n_{(-1)2},m_{(-1)2})n_{(0)}\o  m_{(0)}\\
&\stackrel{(\ref{2g})}{=}n\o m.
\end{align*}
That is, $c^{-1}_{_{N,M}}\circ c_{_{N,M}}=id_{N\o M}$. Similarly $c_{_{N,M}}\circ c^{-1}_{_{N,M}}=id_{M\o N}$.
The proof is completed.
\end{proof}

We also introduce left-right, and right-left Yetter-Drinfeld modules in the following definition.

\begin{definition}
Let $H$ be a dual quasi-Hopf algebra.
\begin{itemize}
  \item [(a)] A right-left Yetter-Drinfeld module is a left $H$-comodule $M$ together with a right $H$-action $\cdot$ on $M$ such that for all $g,h\in H,m\in M$,
      \begin{eqnarray}
      &&(1)~\sigma^{-1}(m_{(-1)},h_1,g_1)\sigma^{-1}(h_3,g_3,(m_{(0)}\cdot h_2g_2)_{(-1)})(m_{(0)}\cdot h_2g_2)_{(0)}\nonumber\\
      &&=\sigma^{-1}(h_2,(m\cdot h_1)_{(-1)},g_1)(m\cdot h_1)_{(0)}\cdot g_2,\\
      &&(2)~ m\cdot 1=m,\\
      &&(3)~ h_2(m\cdot h_1)_{(-1)}\o (m\cdot h_1)_{(0)}=m_{(-1)}h_1\o m_{(0)}\cdot h_2.
      \end{eqnarray}
The category of right-left Yetter-Drinfeld modules over $H$ is denoted by $^H\mathcal{YD}_H$.
  \item [(b)] A left-right Yetter-Drinfeld module is a right $H$-comodule $M$ together with a left $H$-action $\cdot$ on $M$ such that for all $g,h\in H,m\in M$,
      \begin{eqnarray}
      &&(1)~\sigma^{-1}((h_2g_2\cdot m_{(0)})_{(1)},h_1,g_1)\sigma^{-1}(h_3,g_3,m_{(1)})(h_2g_2\cdot m_{(0)})_{(0)}\nonumber\\
      &&=\sigma^{-1}(h_2,(g_2\cdot m)_{(1)},g_1)h_1\cdot(g_2\cdot m)_{(0)},\\
      &&(2)~ 1\cdot m=m,\\
      &&(3)~ (h_2\cdot m)_{(0)}\o(h_2\cdot m)_{(1)}h_1=h_1\cdot m_{(0)}\o h_2m_{(1)}.
       \end{eqnarray}
The category of left-right Yetter-Drinfeld modules over $H$ is denoted by $_H\mathcal{YD}^H$.
\end{itemize}
\end{definition}

\begin{theorem}
Let $H$ be a dual quasi-bialgebra. Then we have the following category isomorphisms:
$$\mathcal{W}_l(^H\mathcal{M})\cong\! ^H\mathcal{YD}_H,\quad \mathcal{W}_r(\mathcal{M}^H)\cong\! _H\mathcal{YD}^H.$$
If $H$ is a dual quasi-Hopf algebra, then these three weak centers are equal to the centers.
\end{theorem}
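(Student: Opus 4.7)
The plan is to mimic the proof of Proposition 2.7 in three parallel cases, one for each isomorphism, and then to adapt the inverse construction of Theorem 2.8 to the dual quasi-Hopf setting.

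For $\mathcal{W}_l({}^H\mathcal{M})\cong{}^H\mathcal{YD}_H$, I begin with $(V,s_{V,-})\in\mathcal{W}_l({}^H\mathcal{M})$, write $s_{V,H}(v\otimes h)=h_s\otimes v_s$, and declare a right action by $v\cdot h=\varepsilon(h_s)v_s$. Testing naturality of $s_{V,-}$ against the $H$-colinear maps $\widetilde{\varphi}:N\to H$, $\widetilde{\varphi}(n)=\varphi(n_{(0)})n_{(-1)}$ (for $\varphi\in N^*$), exactly as in Proposition 2.7, forces
\[
s_{V,N}(v\otimes n)=n_{(0)}\otimes v\cdot n_{(-1)},
\]
so the whole family $s_{V,-}$ is reconstructed from its value at $H$. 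The left weak-center hexagon applied at $X=Y=H$ and contracted with $\varepsilon\otimes\varepsilon$ on the $H$-legs yields the right-left Yetter-Drinfeld compatibility in Definition 2.9(1); left $H$-colinearity of $s_{V,H}$, contracted with $\mathrm{id}\otimes\varepsilon$, yields the mixed comodule-action compatibility; and the normalization $s_{V,k}=\mathrm{id}$ together with naturality gives $v\cdot 1=v$. Conversely, starting from $M\in{}^H\mathcal{YD}_H$, the displayed formula defines a family $s_{V,-}$ whose verification of the weak-center axioms uses exactly the three Yetter-Drinfeld axioms once each; the two constructions are mutually inverse on objects and visibly functorial on morphisms.

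The remaining two isomorphisms are obtained by running the same argument with the direction of the natural transformation reversed and the side of the coaction switched. For $(V,t_{-,V})\in\mathcal{W}_r(\mathcal{M}^H)$ I write $t_{H,V}(h\otimes v)=v_t\otimes h_t$ and set $h\cdot v=\varepsilon(h_t)v_t$; for $(V,s_{V,-})\in\mathcal{W}_l(\mathcal{M}^H)$ I keep the definition used in the first case, but now with the right coaction on $V$. Because the associator on $\mathcal{M}^H$ is given by $\sigma$ rather than $\sigma^{-1}$ as on ${}^H\mathcal{M}$, the $\sigma$/$\sigma^{-1}$ placement in the axioms of Definition 2.9(2) and 2.9(3) is recovered automatically, and no further input is needed.

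Finally, to upgrade each weak center to the corresponding center when $H$ is dual quasi-Hopf, I must exhibit an explicit inverse for each structural transformation. Modeled on Theorem 2.8, the candidate inverse is built by bending the action back through the antipode (using $s$ or $s^{-1}$ as dictated by the side of the coaction) and inserting normalization scalars assembled from the elements $p^L,q^L,p^R,q^R$ of Lemma 2.1; the identities (\ref{2f})--(\ref{2g}), together with the appropriate one-sided analogue of the twist identity of Lemma 2.2, force both compositions to telescope to the identity. The main obstacle is purely computational bookkeeping: each of the three verifications requires choosing the correct one-sided analogue of Lemma 2.1 and carefully tracking $s$ versus $s^{-1}$ and $\sigma$ versus $\sigma^{-1}$ as dictated by the sidedness of the weak center and of the coaction, with no new conceptual ingredient beyond those already deployed in Theorem 2.8.
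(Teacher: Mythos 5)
Your proposal matches the paper's (implicit) proof: the paper gives no argument for this theorem beyond the word ``Similarly,'' deferring to the reconstruction-via-naturality argument of Proposition 2.7 and the explicit antipode-built inverse of Theorem 2.8, which is precisely the adaptation you describe, and the paper's displayed formulas for the braidings and for $d^{-1}_{_{M,N}}$ confirm your candidate structures. The only small slip is attributing the telescoping in the invertibility check to an analogue of Lemma 2.2; in Theorem 2.8 that computation runs on the Yetter--Drinfeld axioms together with (\ref{2f})--(\ref{2g}) from Lemma 2.1, which you also cite, so nothing essential is missing.
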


\begin{proof}
The proof is straightforward and left to the reader.
\end{proof}

$\bullet$ The prebraided monoidal structure on $\mathcal{W}_l(^H\mathcal{M})$ induces a monoidal structure on $^H\mathcal{YD}_H$. We find that the action on $M\o N$ of two right-left Yetter-Drinfeld modules $M$ and $N$ is given by
\begin{align*}
(m\o n)\cdot h=&\sigma^{-1}(m_{(-1)1},n_{(-1)},h_1)\sigma(m_{(-1)2},h_3,(n_{(0)}\cdot h_2)_{(-1)1})\nonumber\\
               &\sigma^{-1}(h_5,(m_{(0)}\cdot h_4)_{(-1)},(n_{(0)}\cdot h_2)_{(-1)2})(m_{(0)}\cdot h_4)_{(0)}\o(n_{(0)}\cdot h_2)_{(0)},
\end{align*}
for all $h\in H,m\in M$, and $n\in N$.

The braiding $d_{_{M,N}}:M\o N\rightarrow N\o M$ is given by
\begin{align*}
d_{_{M,N}}(m\o n)=n_{(0)}\o m\cdot n_{(-1)}.
\end{align*}
In the case when $H$ is a dual quasi-Hopf algebra, the inverse of $d_{_{M,N}}$ is given by
\begin{align*}
d^{-1}_{_{M,N}}(n\o m)=&q^R(n_{(-1)1}m_{(-1)1},s(n_{(-1)6}))\sigma^{-1}(n_{(-1)2},m_{(-1)2},s(n_{(-1)5}))\nonumber\\
              &p^L(s(n_{(-1)3}),(m_{(0)}\cdot s(n_{(-1)4}))_{(-1)})(m_{(0)}\cdot s(n_{(-1)4}))_{(0)}\o n_{(0)}.
\end{align*}

$\bullet$ The prebraided monoidal structure on $_H\mathcal{YD}^H$: for $M,N\in\! _H\mathcal{YD}^H$, the action on $M\o N$ is given by
\begin{align*}
h\cdot(m\o n)=&\sigma^{-1}(h_5,m_{(1)},n_{(1)2})\sigma((h_4\cdot m_{(0)})_{(1)2},h_3,n_{(1)1})\nonumber\\
              &\sigma^{-1}((h_4\cdot m_{(0)})_{(1)1},(h_2\cdot n_{(0)})_{(1)},h_1)(h_4\cdot m_{(0)})_{(0)}\o(h_2\cdot n_{(0)})_{(0)},
\end{align*}
and
$$(m\o n)_{(0)}\o (m\o n)_{(1)}=m_{(0)}\o n_{(0)}\o m_{(1)}n_{(1)},$$
the braiding is the following:
$$t_{_{M,N}}(m\o n)=m_{(1)}\cdot n\o m_{(0)}.$$
In the case when $H$ is a dual quasi-Hopf algebra, the inverse of $t_{_{M,N}}$ is given by
\begin{align*}
t^{-1}_{_{M,N}}(n\o m)&=p^L(s(m_{(1)1}),n_{(1)2}m_{(1)6})\sigma^{-1}(s(m_{(1)2}),n_{(1)1},m_{(1)5})\\
&q^R((s(m_{(1)3})\cdot n_{(0)})_{(1)},s(m_{(1)4}))m_{(0)}\o(s(m_{(1)3})\cdot n_{(0)})_{(0)}.
\end{align*}

\begin{proposition}
We have an isomorphism of monoidal categories
$$F:\overline{^{H^{op,cop}}\mathcal{M}}\rightarrow \mathcal{M}^H,$$
where $F$ acts on objects and morphisms as identity, and the right $H$-coaction is given by $m_{[0]}\o m_{[1]}=m_{(-1)}\o m_{(0)}$. Similarly we have
$$\overline{\mathcal{M}^{H^{op,cop}}}\rightarrow\! ^H\mathcal{M}.$$
\end{proposition}

\begin{proof}
We only need to verify that $F$ preserves the monoidal structure. For all objects $M, N\in\! ^{H^{op,cop}}\mathcal{M}$,
$$(n\o m)_{(-1)}\o (n\o m)_{(0)}=m_{(-1)}n_{(-1)}\o n_{(0)}\o m_{(0)}.$$
The associativity constraint $a_{P,N,M}:(P\o N)\o M\rightarrow P\o (N\o M)$ is defined as
$$a_{P,N,M}(p\o n\o m)=\sigma^{-1}(m_{(-1)},n_{(-1)},p_{(-1)})p_{(0)}\o n_{(0)}\o m_{(0)}.$$
As for the monoidal structure on $\overline{^{H^{op,cop}}\mathcal{M}}$, we have $M\overline{\o}N=N\o M$. Then
$$(m\overline{\o}n)_{(-1)}\o (m\overline{\o}n)_{(0)}=m_{(-1)}n_{(-1)}\o (m_{(0)}\overline{\o} n_{(0)}).$$
The associativity constraint $\overline{a}_{M,N,P}:(M\overline{\o}N)\overline{\o} P\rightarrow M\overline{\o}(N\overline{\o} P)$ is defined as
$$\overline{a}_{M,N,P}(m\overline{\o} n\overline{\o} p)=\sigma(m_{(-1)},n_{(-1)},p_{(-1)})m_{(0)}\overline{\o} n_{(0)}\overline{\o} p_{(0)}.$$
The proof is completed.
\end{proof}

\begin{proposition}
Let $H$ be a dual quasi-Hopf algebra. Then we have the following isomorphisms of braided monoidal categories:
$$\mathcal{YD}^H_H\cong\! \overline{ ^{H^{op,cop}} _{H^{op,cop}}\mathcal{YD}},\ \ ^H\mathcal{YD}_H\cong\! \overline{_{H^{op,cop}}\mathcal{YD}^{H^{op,cop}}}.$$
\end{proposition}

\begin{proof}
By Proposition 1.1 and Proposition 2.12, we obtain
$$\mathcal{YD}^H_H\cong\! \mathcal{Z}_l(\mathcal{M}^H)\cong\! \mathcal{Z}_l(\overline{^{H^{op,cop}}\mathcal{M}})\cong\! \overline{\mathcal{Z}_r(^{H^{op,cop}}\mathcal{M})}\cong\! \overline{^{H^{op,cop}}_{H^{op,cop}}\mathcal{YD}},$$
and
$$^H\mathcal{YD}_H\cong\! \mathcal{Z}_l(^H\mathcal{M})\cong\! \mathcal{Z}_l(\overline{\mathcal{M}^{H^{op,cop}}})\cong\! \overline{\mathcal{Z}_r(\mathcal{M}^{H^{op,cop}})}\cong\! \overline{_{H^{op,cop}}\mathcal{YD}^{H^{op,cop}}}.$$
\end{proof}

\begin{proposition}\cite{BCP}
Let $\mathcal{C}$ be a monoidal category. Then we have a braided isomorphism of braided monoidal categories $T:\mathcal{Z}_l\mathcal{(C)}\rightarrow\mathcal{Z}_r\mathcal{(C)}^{in}$, given by
$$T(V,s_{_{V,-}})=(V,s^{-1}_{_{V,-}})\quad \hbox{and}\quad T(\upsilon)=\upsilon.$$
\end{proposition}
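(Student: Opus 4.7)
The statement is essentially formal and follows by inverting arrows, so the plan is to verify the four pieces that a braided monoidal isomorphism requires --- well-definedness on objects, on morphisms, compatibility with the tensor product, and compatibility with the braiding --- by taking the inverse of each defining identity for $\mathcal{Z}_l(\mathcal{C})$.

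First I would check that $T$ is well-defined on objects. Given $(V,s_{V,-})\in\mathcal{Z}_l(\mathcal{C})$, I need $(V,s^{-1}_{V,-})$ to lie in $\mathcal{Z}_r(\mathcal{C})$, i.e.\ it must satisfy the pentagon-type axiom (\ref{1k}) with $t_{-,V}=s^{-1}_{V,-}$. The left-center axiom reads
\[
(X\o s_{V,Y})\circ a_{X,V,Y}\circ(s_{V,X}\o Y)=a_{X,Y,V}\circ s_{V,X\o Y}\circ a_{V,X,Y}.
\]
Taking the inverse of both sides (using functoriality of $\o$ and the naturality of $a$) gives
\[
(s^{-1}_{V,X}\o Y)\circ a^{-1}_{X,V,Y}\circ(X\o s^{-1}_{V,Y})=a^{-1}_{V,X,Y}\circ s^{-1}_{V,X\o Y}\circ a^{-1}_{X,Y,V},
\]
which is exactly (\ref{1k}) with $t_{X,V}=s^{-1}_{V,X}$. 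The normalization condition $s_{V,I}=\mathrm{id}_V$ inverts to $t_{I,V}=\mathrm{id}_V$. Moreover, if $s_{V,-}$ is a natural isomorphism, so is $s^{-1}_{V,-}$, so $T$ indeed lands in the (strong) center, not merely the weak center.

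Next I would check the remaining structure. For morphisms, the defining condition $(X\o\psi)\circ s_{V,X}=s_{V',X}\circ(\psi\o X)$ is equivalent, by composing with $s^{-1}_{V,X}$ and $s^{-1}_{V',X}$, to $s^{-1}_{V',X}\circ(X\o\psi)=(\psi\o X)\circ s^{-1}_{V,X}$, which is precisely the morphism condition in $\mathcal{Z}_r(\mathcal{C})$; so $T(\psi)=\psi$ is well-defined and clearly functorial. For the monoidal structure, inverting formula (\ref{1n}) for $s_{V\o V',X}$ yields
\[
s^{-1}_{V\o V',X}=a^{-1}_{V,V',X}\circ(V\o s^{-1}_{V',X})\circ a_{V,X,V'}\circ(s^{-1}_{V,X}\o V')\circ a^{-1}_{X,V,V'},
\]
which matches (\ref{1m}) under the substitution $t_{X,V}=s^{-1}_{V,X}$; hence $T$ strictly preserves tensor products, and $T(I,\mathrm{id})=(I,\mathrm{id})$.

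For the braiding compatibility, recall that the braiding on $\mathcal{Z}_l(\mathcal{C})$ is $c_{V,V'}=s_{V,V'}$, while on $\mathcal{Z}_r(\mathcal{C})$ it is $d_{V,V'}=t_{V,V'}$; thus the mirror-reversed braiding on $\mathcal{Z}_r(\mathcal{C})^{in}$ sends $(V,t_{-,V})\o(V',t_{-,V'})$ to $(V',t_{-,V'})\o(V,t_{-,V})$ via $\tilde d_{V,V'}=d^{-1}_{V',V}=t^{-1}_{V,V'}$. Under $T$, $t_{V,V'}=s^{-1}_{V',V}$, so $\tilde d_{V,V'}=s_{V,V'}=c_{V,V'}$, proving that $T$ intertwines the braidings on the nose. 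Finally, $T$ is an isomorphism because the analogous assignment $(V,t_{-,V})\mapsto(V,t^{-1}_{-,V})$ gives a two-sided inverse. I do not expect a genuine obstacle here; the only place where one must be slightly careful is step one, where one uses that both sides of the left-center hexagon are composites of invertible morphisms (which holds because $s_{V,-}$ is a natural isomorphism on objects of $\mathcal{Z}_l$) so that ``inverting both sides'' produces the claimed identity in the correct order.
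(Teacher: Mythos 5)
Your verification is correct and is the standard arrow-inversion argument; note that the paper itself gives no proof of this proposition but simply quotes it from \cite[Proposition 1.1]{BCP}, so there is nothing internal to compare against, and your argument is exactly what that citation stands in for (invert the hexagon, the morphism condition, and formula (\ref{1n}) to land on (\ref{1k}) and (\ref{1m}), using that on objects of $\mathcal{Z}_l(\mathcal{C})$ the half-braidings are isomorphisms). The only blemish is a transposed subscript in the braiding step: with the convention $d_{V,V'}=t_{V,V'}$ one has $\tilde d_{V,V'}=d^{-1}_{V',V}=t^{-1}_{V',V}=(s^{-1}_{V,V'})^{-1}=s_{V,V'}=c_{V,V'}$, which is the identity you ultimately assert, so the conclusion stands.
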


Of course, the conclusion holds for the right center. By this isomorphism, we have the following result.

\begin{proposition}
Let $H$ be a dual quasi-Hopf algebra, and $^H\mathcal{YD}^{in}_H$ the category $^H\mathcal{YD}_H$ with the braiding
$$\tilde{c}_{_{M,N}}=c^{-1}_{_{M,N}}.$$
Then we have an isomorphism of braided monoidal categories
$$T:\ ^H\mathcal{YD}^{in}_H\cong\! ^H_H\mathcal{YD},$$
defined as follows. For $M\in\! ^H\mathcal{YD}_H$, $T(M)=M$ as a left $H$-comodule; the left $H$-action is given by
\begin{align*}
h\triangleright m&=q^R(h_{1}m_{(-1)1},s(h_{6}))\sigma^{-1}(h_{2},m_{(-1)2},s(h_{5}))\\
              &p^L(s(h_{3}),(m_{(0)}\cdot s(h_{4}))_{(-1)})(m_{(0)}\cdot s(h_{4}))_{(0)},
\end{align*}
for all $h\in H,m\in M$, where $\cdot$ is the right action of $H$ on $M$. The functor $T$ sends a morphism to itself.
\end{proposition}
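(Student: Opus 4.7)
The plan is to realize $T$ as a composition of three braided monoidal equivalences already available in the paper, and then read off the explicit action formula from this composition. First, the right-left analog of Proposition 2.7 contained in Theorem 2.10, together with the statement (also in Theorem 2.10) that the three weak centers coincide with their centers when $H$ is a dual quasi-Hopf algebra, gives ${}^H\mathcal{YD}_H \cong \mathcal{W}_l({}^H\mathcal{M}) = \mathcal{Z}_l({}^H\mathcal{M})$; passing to the mirror-reversed categories yields ${}^H\mathcal{YD}_H^{in} \cong \mathcal{Z}_l({}^H\mathcal{M})^{in}$. Second, the preceding proposition (Proposition 1.1 of \cite{BCP}) provides the braided monoidal isomorphism $\mathcal{Z}_l(\mathcal{C}) \to \mathcal{Z}_r(\mathcal{C})^{in}$, $(V, s_{V,-}) \mapsto (V, s^{-1}_{V,-})$; the same underlying functor is automatically a braided monoidal isomorphism $\mathcal{Z}_l(\mathcal{C})^{in} \to \mathcal{Z}_r(\mathcal{C})$, since reversing the braiding on both source and target preserves the braided condition. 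Third, Proposition 2.7 together with Theorem 2.8 gives $\mathcal{Z}_r({}^H\mathcal{M}) \cong {}^H_H\mathcal{YD}$. Composing these three equivalences produces the required braided monoidal isomorphism $T \colon {}^H\mathcal{YD}_H^{in} \to {}^H_H\mathcal{YD}$, which acts as the identity on underlying left $H$-comodules and on morphisms.

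Next I extract the formula for the transported left action. Under the first equivalence, $M \in {}^H\mathcal{YD}_H$ corresponds to the object $(M, s_{M,-}) \in \mathcal{Z}_l({}^H\mathcal{M})$ whose left half-braiding is $s_{M,N}(m \otimes n) = n_{(0)} \otimes m \cdot n_{(-1)}$, as recorded by the braiding $d_{M,N}$ displayed after Theorem 2.10. Under the second step, this becomes $(M, s^{-1}_{M,-}) \in \mathcal{Z}_r({}^H\mathcal{M})$, where $s^{-1}_{M,N} \colon N \otimes M \to M \otimes N$ is exactly the formula written in the paper for $d^{-1}_{M,N}$. Under the third step, the proof of Proposition 2.7 recovers the left $H$-action from a right half-braiding via $h \triangleright m = (\mathrm{id} \otimes \varepsilon)\bigl(s^{-1}_{M,H}(h \otimes m)\bigr)$, where $H$ carries its natural left coaction $\Delta$. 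Substituting $n_{(-1)i} = h_i$ for $i = 1, \ldots, 6$ and $n_{(0)} = h_7$ into the formula for $s^{-1}_{M,N}$ and applying the counit to the last tensorand collapses the factor $h_7$ and leaves exactly the expression displayed in the statement.

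The principal obstacle is establishing the explicit formula for the inverse half-braiding $s^{-1}_{M,N}$ in the right-left setting, which is the only ingredient not already spelled out in the excerpt. This is the right-left mirror of the computation carried out in Theorem 2.8 for the left-left case and proceeds by the same scheme, using the $p^L$ and $q^L$ identities of Lemma 2.1 together with the right-left Yetter-Drinfeld axioms of Definition 2.9(1) to verify that the proposed expression inverts $s_{M,N}(m \otimes n) = n_{(0)} \otimes m \cdot n_{(-1)}$ on both sides. Once this lemma is in hand, the proposition follows purely formally from the categorical composition above; in particular the axioms of Definition 2.3 for $T(M)$, as well as braided-monoidality of $T$, need not be re-verified by hand, as each is inherited from the three component equivalences.
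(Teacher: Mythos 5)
Your proposal is correct and follows essentially the same route as the paper: the paper likewise defines $T$ as the composite ${}^H\mathcal{YD}^{in}_H\rightarrow\mathcal{Z}_l({}^H\mathcal{M})^{in}\rightarrow\mathcal{Z}_r({}^H\mathcal{M})\rightarrow{}^H_H\mathcal{YD}$ and reads off the action as $h\triangleright m=(\mathrm{id}\otimes\varepsilon)d^{-1}_{M,H}(h\otimes m)$ using the displayed formula for $d^{-1}_{M,N}$. Your added remark that the inverse half-braiding formula itself still needs the mirror of the Theorem 2.8 verification is a fair observation about a step the paper leaves implicit, but it does not change the argument.
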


\begin{proof}
The functor $T$ is just the composition of the isomorphisms
$$^H\mathcal{YD}^{in}_H\rightarrow\mathcal{Z}_l(^H\mathcal{M})^{in}\rightarrow\mathcal{Z}_r(^H\mathcal{M})\rightarrow\!  ^H_H\mathcal{YD}.$$
For $M\in\! ^H\mathcal{YD}^{in}_H$, we compute the corresponding left-left Yetter-Drinfeld module structure on $M$ is the following:
\begin{align*}
&h\triangleright m=(id\o\varepsilon)s^{-1}_{_{M,H}}(h\o m)=(id\o\varepsilon)\tilde{c}_{_{M,H}}(h\o m)=(id\o\varepsilon)d^{-1}_{_{M,H}}(h\o m)\\
        &=q^R(h_{1}m_{(-1)1},s(h_{6}))\sigma^{-1}(h_{2},m_{(-1)2},s(h_{5}))
              p^L(s(h_{3}),(m_{(0)}\cdot s(h_{4}))_{(-1)})(m_{(0)}\cdot s(h_{4}))_{(0)},
\end{align*}
as claimed.
\end{proof}

In the same way, we have the following result.

\begin{proposition}
Let $H$ be a dual quasi-Hopf algebra. Then the categories $\mathcal{YD}^H_H$ and $_H\mathcal{YD}^{Hin}$ are isomorphic as braided monoidal categories.
\end{proposition}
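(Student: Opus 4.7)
The plan is to mimic the proof of Proposition 2.12 step by step. I would construct the functor $T$ as the composition
\[
\mathcal{YD}^H_H\xrightarrow{\sim}\mathcal{W}_l(\mathcal{M}^H)=\mathcal{Z}_l(\mathcal{M}^H)\xrightarrow{\sim}\mathcal{Z}_r(\mathcal{M}^H)^{in}=\mathcal{W}_r(\mathcal{M}^H)^{in}\xrightarrow{\sim}{}_H\mathcal{YD}^{Hin},
\]
in which the outer arrows are the isomorphisms supplied by Theorem 2.10 and the middle arrow is the braided isomorphism of Proposition 2.11. The equalities $\mathcal{W}_l(\mathcal{M}^H)=\mathcal{Z}_l(\mathcal{M}^H)$ and $\mathcal{W}_r(\mathcal{M}^H)=\mathcal{Z}_r(\mathcal{M}^H)$ are guaranteed by the final assertion of Theorem 2.10, which is precisely where the dual quasi-Hopf hypothesis enters. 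Since every factor in the composition is a braided monoidal isomorphism, so is $T$.

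Next I would unwind this composition to give a concrete description of $T$. On underlying right $H$-comodules and on morphisms, $T$ is the identity; the new left $H$-action on $T(M)$ is induced by the \emph{reversed} braiding. Transporting the formula $h\triangleright m=(id\otimes\varepsilon)c_{H,M}(h\otimes m)$ from the right-comodule analogue of Proposition 2.7 (which is part of the content of Theorem 2.10) through the reversal step yields
\[
h\cdot m=(id\otimes\varepsilon)\vartheta^{-1}_{M,H}(h\otimes m),
\]
where $\vartheta_{M,N}(m\otimes n)=n_{(0)}\otimes m\cdot n_{(1)}$ is the braiding on $\mathcal{YD}^H_H$ recorded just before the statement of Proposition 2.13. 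This makes $T$ explicit and automatically delivers a left-right Yetter-Drinfeld structure on $T(M)$ in the sense of Definition 2.9, since the axioms there are forced by the categorical construction.

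The only real work is obtaining a closed form for $\vartheta^{-1}_{M,N}$, mirroring the expression for $d^{-1}_{M,N}$ displayed in Theorem 2.10 but in the right-comodule setting; this is where I expect the main obstacle. One approach is direct: posit a candidate inverse built from $s$, $\sigma$, and the auxiliary elements $p^R,q^R,p^L,q^L$ of Lemma 2.1, and verify $\vartheta^{-1}\circ\vartheta=id$ and $\vartheta\circ\vartheta^{-1}=id$ using the dual quasi-Hopf axioms (\ref{1e})--(\ref{1f}) and the identities (\ref{2a})--(\ref{2g}) of Lemma 2.1, in a bookkeeping exercise parallel to the computation behind Theorem 2.10. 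A cleaner route is to identify $\mathcal{M}^H$ with ${}^{H^{cop}}\mathcal{M}$ and transfer the formula for $d^{-1}$ applied to the dual quasi-Hopf algebra $H^{cop}$; this reduces the problem to what has already been carried out in the left-comodule case and avoids repeating a long coherence computation.
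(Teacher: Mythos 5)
Your proposal is correct and coincides with the paper's own (largely implicit) argument: the paper establishes this proposition ``in the same way'' as Proposition 2.12, namely by composing the isomorphisms $\mathcal{YD}^H_H\cong\mathcal{W}_l(\mathcal{M}^H)=\mathcal{Z}_l(\mathcal{M}^H)$ and $\mathcal{W}_r(\mathcal{M}^H)\cong{}_H\mathcal{YD}^H$ from Theorem 2.10 with the left--right center flip of Proposition 2.11, exactly as you do. Your further remarks on extracting the explicit left action as $(id\otimes\varepsilon)\circ\vartheta^{-1}_{M,H}$ and on obtaining the inverse braiding by transferring the $d^{-1}$ formula through $\mathcal{M}^H\cong{}^{H^{cop}}\mathcal{M}$ go beyond what the paper records, but are consistent with its method.
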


\section{The rigid braided category $_H^H\mathcal{YD}^{fd}$}
\def\theequation{3.\arabic{equation}}
\setcounter{equation} {0} \hskip\parindent

It is well known that the category of finite dimensional Yetter-Drinfeld modules over a Hopf algebra with bijective antipode is rigid. Since $^H\mathcal{M}_{fd}$ is rigid, the same result holds for the category of finite dimensional Yetter-Drinfeld modules over a dual quasi-Hopf algebra. In this section we will give the explicit forms.

\begin{proposition}\cite{BCP}
Let $\mathcal{C}$ be a rigid monoidal category. Then the weak left (respectively right) center of $\mathcal{C}$ is a rigid braided monoidal category.
\end{proposition}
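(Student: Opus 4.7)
The plan is to establish two facts about the weak left center $\mathcal{W}_l(\mathcal{C})$, the case of $\mathcal{W}_r(\mathcal{C})$ being entirely parallel. First, that when $\mathcal{C}$ is rigid, every half-braiding $s_{V,-}$ is automatically a natural isomorphism, so $\mathcal{W}_l(\mathcal{C})=\mathcal{Z}_l(\mathcal{C})$ and is thus genuinely braided. Second, that every object $(V,s_{V,-})$ admits both a left and a right dual in $\mathcal{W}_l(\mathcal{C})$.

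For the first step, fix $(V,s_{V,-})$ and $X\in\mathcal{C}$, and let ${}^*X$ be its right dual in $\mathcal{C}$. The natural candidate for an inverse $t_{V,X}:X\o V\to V\o X$ is the composite (associators suppressed for readability)
\[ X\o V\xrightarrow{\mathrm{id}\o\mathrm{coev}'_X} X\o V\o{}^*X\o X\xrightarrow{\mathrm{id}\o s_{V,{}^*X}\o\mathrm{id}} X\o{}^*X\o V\o X\xrightarrow{\mathrm{ev}'_X\o\mathrm{id}} V\o X. \]
The verification $t_{V,X}\circ s_{V,X}=\mathrm{id}$ and $s_{V,X}\circ t_{V,X}=\mathrm{id}$ is a zig-zag argument: one applies the hexagon axiom for $s_{V,-}$ to the pair $({}^*X,X)$ (respectively $(X,{}^*X)$), then collapses the resulting $X\o{}^*X\o X$ block via the triangle identities of $({}^*X,\mathrm{ev}'_X,\mathrm{coev}'_X)$.

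For the second step, I take the left dual of $(V,s_{V,-})$ to be $(V^*,s_{V^*,-})$, where $V^*$ is the left dual of $V$ in $\mathcal{C}$ and $s_{V^*,X}$ is the mate of $s_{V,X}^{-1}$ (now available by step one):
\[ s_{V^*,X}:V^*\o X\xrightarrow{\mathrm{id}\o\mathrm{coev}_V} V^*\o X\o V\o V^*\xrightarrow{\mathrm{id}\o s_{V,X}^{-1}\o\mathrm{id}} V^*\o V\o X\o V^*\xrightarrow{\mathrm{ev}_V\o\mathrm{id}} X\o V^*, \]
again modulo associators; the right dual $({}^*V,s_{{}^*V,-})$ is built by the mirror recipe. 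Checking that $s_{V^*,-}$ satisfies the hexagon axiom is a formal consequence of the corresponding identity for $s_{V,-}^{-1}$ together with the duality axioms. The principal structural point is that $\mathrm{ev}_V:V^*\o V\to I$ and $\mathrm{coev}_V:I\to V\o V^*$ are morphisms in $\mathcal{W}_l(\mathcal{C})$: expanding the tensor-product half-braiding (\ref{1n}) on $V^*\o V$ and on $V\o V^*$ and substituting the formula for $s_{V^*,-}$ above, both compatibility conditions collapse to the zig-zag identities for $V$. The triangle identities for the dual pair $(V,V^*)$ in $\mathcal{W}_l(\mathcal{C})$ are then inherited from those in $\mathcal{C}$.

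The main obstacle I anticipate is sheer bookkeeping: none of the manipulations are difficult in a strict monoidal category, but in the quasi setting every diagram acquires many instances of $a_{-,-,-}$ and the tensor-product formula (\ref{1n}) is itself already a five-term identity. The cleanest organization is to invoke Mac Lane coherence to reduce to the strict case and then carry out the two conceptual steps — the zig-zag in step one, and the naturality of $s_{V,-}$ against $\mathrm{ev}_V$ and $\mathrm{coev}_V$ in step two — there. What genuinely requires rigidity of $\mathcal{C}$, and is not automatic for a general monoidal category, is the very construction of $t_{V,X}$ in step one; without this one cannot even write down $s_{V^*,-}$.
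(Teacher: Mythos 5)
Your proposal is correct and matches the construction the paper itself indicates: the paper offers no proof of this proposition (it is quoted from [BCP, Proposition 1.3]), but the formula (\ref{3a}) displayed immediately after it — the half-braiding on $^*V$ obtained as the mate of $c^{-1}_{_{V,X}}$ via $coev'_V$ and $ev'_V$ — is exactly your step-two recipe, and your step one (inverting every half-braiding by conjugating with the duality data of $X$, then collapsing via naturality and the zig-zag identities) is the standard argument for why the weak center coincides with the center in the rigid case.
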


For example, for any object $(V,c_{_{-,V}})\in\mathcal{Z}_r\mathcal{(C)}$, $^*(V,c_{_{-,V}})=(^*V,c_{_{-,^*V}})$, with $c_{_{-,^*V}}$ given by the following composition:
\begin{align}
c_{_{X,^*V}}:X\o\! ^*V&\xrightarrow{coev'_V\o (X\o^*V)}(^*V\o V)\o(X\o\! ^*V)\nonumber\\
                     &\xrightarrow{a_{_{^*V,V,(X\o^*V)}}}\ ^*V\o (V\o(X\o\! ^*V))\nonumber\\
                     &\xrightarrow{^*V\o a^{-1}_{_{V,X,^*V}}}\ ^*V\o ((V\o X)\o\! ^*V)\nonumber\\
                     &\xrightarrow{^*V\o c^{-1}_{_{V,X}}\o^*V}\ ^*V\o ((X\o V)\o\! ^*V)\label{3a}\\
                     &\xrightarrow{^*V\o a_{_{V,X,^*V}}}\ ^*V\o (X\o (V\o\! ^*V))\nonumber\\
                     &\xrightarrow{a^{-1}_{_{ ^*V,V,(V\o^*V)}}}\ (^*V\o X)\o (V\o\! ^*V)\nonumber\\
                     &\xrightarrow{^*V\o V\o ev'_v}\ ^*V\o X.\nonumber
\end{align}

\begin{lemma}
Let $H$ be a dual quasi-Hopf algebra. Then for all $a,b,c\in H$, the following relations hold:
\begin{eqnarray}
&&q^L(a_1,b_1c_1)\sigma(a_2,b_2,c_2)=q^L(a_2,b_1)\sigma^{-1}(s(a_1),a_3b_2,c),\label{3b}\\
&&p^R(s(a_1),a_3b_3)q^L(a_2,b_2)q^L(b_1,s(a_4b_4))=f(a,b),\label{3c}
\end{eqnarray}
\end{lemma}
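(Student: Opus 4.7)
The plan is to prove both identities by direct manipulation in $(H^{\o n})^*$: expand each factor through its definition and then apply the $3$-cocycle identity~(\ref{1c}) together with the antipode axioms~(\ref{1e})--(\ref{1f}) and the normalizations~(\ref{1d})(\ref{1g}). Since $p^R$, $q^L$, $f$ and $\l$ are all scalar-valued combinations of $\sigma^{\pm 1}$, $\a$, $\b$ and iterated coproducts, both identities reduce to equalities of convolution products, and the work is purely combinatorial: in what order to apply the cocycle condition. As a sanity check, when $\sigma=\varepsilon^{\o 3}$ one has $q^L(a,b)=\a(a)\varepsilon(b)$, $p^R(a,b)=\varepsilon(a)\b(b)$ and $f(a,b)=\a(a)\a(b)\b(ab)$, and both identities then collapse to routine Hopf-algebra tautologies via $s(a_1)a_2=\varepsilon(a)$.

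For~(\ref{3b}), I would start from
$$q^L(a_1,b_1c_1)\sigma(a_2,b_2,c_2)=\sigma^{-1}(s(a_1),a_3,b_1c_1)\a(a_2)\sigma(a_4,b_2,c_2).$$
The product $\sigma^{-1}(s(a_1),a_3,b_1c_1)\sigma(a_4,b_2,c_2)$ is precisely the pattern controlled by~(\ref{1c}) applied to the quadruple $(s(a_1),a_3,b,c)$, keeping in mind that $\Delta\circ s=(s\o s)\circ\Delta^{op}$ so that the coproducts of $s(a_1)$ re-interleave with those of $a_3$. One application of the cocycle identity should produce the factor $\sigma^{-1}(s(a_1),a_3b_2,c)$ appearing on the right-hand side, together with a residual $\sigma^{-1}(s(a_2),a_4,b_1)\a(a_3)$ that is exactly $q^L(a_2,b_1)$ after the remaining $\a$-bookkeeping.

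For~(\ref{3c}), the strategy is analogous but considerably longer. I first expand $p^R(s(a_1),a_3b_3)$, $q^L(a_2,b_2)$ and $q^L(b_1,s(a_4b_4))$ through their definitions, obtaining a scalar built from $\sigma^{\pm 1}$, $\a$, $\b$ and $s$. On the target side I replace $\l(a_2,b_2)$ by $\nu(s(b_1),s(a_1),a_3,b_3)\a(a_2)\a(b_2)$ so that the two $\a$'s in $\l$ can be paired directly with the two $\a$'s produced by the two $q^L$'s on the left. The $\b$-factor coming from $p^R$ on the left and the $\b(a_4b_4)$ in $f$ on the right are then matched via~(\ref{1e}), while the $\sigma^{-1}(s(b_1)s(a_1),a_3b_3,s(a_5b_5))$ factor in $f$ is produced by repeated applications of~(\ref{1c}), with the product $s(b_1)s(a_1)$ appearing naturally in the first slot once the $q^L(b_1,\cdot)$ block is combined with the $p^R(s(a_1),\cdot)$ block.

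The main obstacle I foresee is the Sweedler-index bookkeeping in~(\ref{3c}): after full expansion one faces a product of roughly ten scalar factors living in $(H^{\o N})^*$ with $N$ around $6$ or $7$. I plan to organize the computation exactly as the $\om$-trick used in the proof of~(\ref{2c}) in Lemma~2.1: fix a single long linear labelling $\Delta^{(N-1)}a=a_1\o\cdots\o a_N$ in advance, rewrite both sides in this common labelling, and introduce two auxiliary elements of $(H^{\o N})^*$ whose images under an appropriate projection are the two sides of~(\ref{3c}); the cocycle identity is then applied in a controlled sequence to pass from one auxiliary element to the other.
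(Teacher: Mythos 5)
You handle (\ref{3b}) correctly and in essentially the paper's (one-line) way: expand $q^L(a_1,b_1c_1)$, apply the cocycle identity (\ref{1c}) to the quadruple $(s(a_1),a_3,b,c)$, and let the antipode axiom (\ref{1e}) together with the normalization (\ref{1g}) kill the merged slot $s(a_1)\a(a_2)a_3$; after cancelling a $\sigma^{-1}$ against the $\sigma(a_2,b_2,c_2)$ in the last two arguments, the right-hand side drops out. No complaint there.

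For (\ref{3c}) your plan diverges from the paper exactly where the real work lies, and as written it is not a proof. You propose to expand $f$ through its explicit formula (via $\l$ and $\nu$) and match the two sides term by term, with the decisive step compressed into the assertion that the factor $\sigma^{-1}(s(b_1)s(a_1),a_3b_3,s(a_5b_5))$ ``is produced by repeated applications of (\ref{1c})''. That sentence is the entire content of the identity: the left-hand side of (\ref{3c}) presents the antipode only through slots of the form $s(a_ib_i)$ and isolated $s(a_\cdot)$, $s(b_\cdot)$, while the target involves reversed products $s(b_\cdot)s(a_\cdot)$, and you give no mechanism, and no evidence, that the cocycle identity alone reorganizes one configuration into the other. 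The paper's proof never expands $f$ at all. It uses the \emph{defining} relation (\ref{1h}) to trade $s(a_5b_5)$ for $s(b_5)s(a_5)$ at the cost of an $f$ and a $g$, absorbs $g$ together with the $\b(a_4b_4)$ coming from $p^R$ into $\chi$ via (\ref{1i}), unwinds $\chi$ and $q^L(a_2,b_2)$ with (\ref{1c}), (\ref{1a}) and (\ref{1e}), and then recognizes a residual product of the form $p^L(s(b_2),s(a_1))\,q^L(b_1,s(b_3)s(a_2))$ that collapses to $\varepsilon\o\varepsilon$ by the inverse relations (\ref{2f})--(\ref{2g}) of Lemma 2.1, leaving exactly the untouched factor $f(a,b)$. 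None of (\ref{1h}), (\ref{1i}), (\ref{2f})--(\ref{2g}) appear anywhere in your plan, so you are missing both the tool that creates the reversed antipode products and the endgame that collapses the remaining $p$--$q$ pair. Your brute-force route is not provably doomed (the fully expanded identity must follow from the axioms), and your trivial-$\sigma$ sanity check is correct, but that check detects nothing here since for $\sigma=\varepsilon^{\o 3}$ the antipode is genuinely anti-multiplicative and the whole difficulty disappears. As it stands, the hard half of the lemma is left unproved.
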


\begin{proof}
By the definition of $q^L$, it is easy to verify (\ref{3b}). Then for all $a,b\in H$,
\begin{align*}
&p^R(s(a_1),a_3b_3)q^L(a_2,b_2)q^L(b_1,s(a_4b_4))\\
&=\sigma^{-1}(s(a_1),a_3b_3,s(a_5b_5))\b(a_4b_4)q^L(a_2,b_2)q^L(b_1,s(a_6b_6))\\
&\stackrel{(\ref{1h},\ref{1i})}{=}\sigma^{-1}(s(a_1),a_3b_3,s(b_5)s(a_5))q^L(a_2,b_2)\chi(a_4,b_4)q^L(b_1,s(b_6)s(a_6))f(a_7,b_7)\\
&=\sigma^{-1}(s(a_1),a_5b_3,s(b_9)s(a_{10}))\sigma^{-1}(s(a_2),a_4,b_2)\a(a_3)\\
&\sigma(a_6b_4,s(b_8),s(a_9))\sigma^{-1}(a_7,b_5,s(b_7))\b(a_8)\b(b_6)q^L(b_1,s(b_{10})s(a_{11}))f(a_{12},b_{11})\\
&\stackrel{(\ref{1c},\ref{1e})}{=}\sigma^{-1}(s(a_2),a_4,b_2)\sigma^{-1}(s(a_1),a_5b_3,s(b_9)s(a_{10}))\sigma^{-1}(a_6,b_4,s(b_8)s(a_9))\\
&\a(a_3)\sigma(b_5,s(b_7),s(a_8))\b(b_6)\b(a_7)q^L(b_1,s(b_{10})s(a_{11}))f(a_{12},b_{11})\\
&\stackrel{(\ref{1c},\ref{1e})}{=}\sigma^{-1}(s(a_1),a_3,b_2(s(b_6)s(a_6)))\a(a_2)\sigma(b_3,s(b_5),s(a_5))\\
&\b(b_4)\b(a_4)q^L(b_1,s(b_{7})s(a_{7}))f(a_{8},b_{8})\\
&\stackrel{(\ref{1a},\ref{1e})}{=}\sigma^{-1}(s(a_1),a_3,s(a_5))\a(a_2)\sigma(b_2,s(b_4),s(a_6))\b(b_3)\b(a_4)q^L(b_1,s(b_{5})s(a_{7}))f(a_{8},b_{6})\\
&\stackrel{(\ref{1a},\ref{1e})}{=}\sigma(b_2,s(b_4),s(a_1))\b(b_3)q^L(b_1,s(b_{5})s(a_{2}))f(a_{3},b_{6})\\
&=p^L(s(b_2),s(a_1))q^L(b_1,s(b_{3})s(a_{2}))f(a_{3},b_{4})\\
&\stackrel{(\ref{2g})}{=}f(a,b),
\end{align*}
as needed. The proof is completed.
\end{proof}

\begin{theorem}
Let $H$ be a dual quasi-Hopf algebra. Then $^H_H\mathcal{YD(H)}^{fd}$ is a braided monoidal rigid category. For a finite dimensional left-left Yetter-Drinfeld module $M$ with basis $\{m_i\}_i$ and dual basis $\{m^i\}_i$, the left and right duals $M^*$ and $^*M$ are equal to Hom$(M,k)$ as a vector space, with the following $H$-action and $H$-coaction:
\begin{itemize}
  \item [(1)] For $^*M$,
  \begin{align}
  \l_{_{^*M}}(\varphi)=&\langle \varphi,m_{i(0)}\rangle s(m_{i(-1)})\o m^i,\label{3d}\\
  h\cdot\varphi=&f(s^{-1}(h_3),m_{i(-1)})g((s^{-1}(h_2)\cdot m_{i(0)})_{(-1)},s^{-1}(h_1))\nonumber\\
                 &\varphi((s^{-1}(h_2)\cdot m_{i(0)})_{(0)})m^i.
  \end{align}
  \item [(2)] For $M^*$,
  \begin{align}
  \l_{_{M^*}}(\varphi')=&\langle\varphi',m_{i(0)}\rangle s^{-1}(m_{i(-1)})\o m^i,\label{3e}\\
  h\cdot\varphi'=&f(s^{-1}(m_{i(-1)}),h_3)g(h_1,s^{-1}((s(h_2)\cdot m_{i(0)})_{(-1)}))\nonumber\\
                 &\varphi'((s(h_2)\cdot m_{i(0)})_{(0)}))m^i,
  \end{align}
\end{itemize}
for all $h\in H,\varphi\in\! ^*M,\varphi'\in M^*$.
\end{theorem}

\begin{proof}
The left $H$-coaction on $^*M$ viewed as an object in $^H_H\mathcal{YD}$ is the same as the left $H$-coaction on $^*M$ viewed as an object in $^H\mathcal{M}$. Now we compute the left $H$-action using (\ref{3a}). For all $h\in H,\varphi\in\! ^*M$,
\begin{align*}
&h\cdot\varphi=(id\o\varepsilon)c_{_{H,^*M}}(h\o\varphi)\\
&=\sigma^{-1}(m^i_{(-1)1},m_{i(-1)2},h_1s((s^{-1}(h_6)\cdot m_{i(0)})_{(-1)8})\sigma(m_{i(-1)3},h_2,s((s^{-1}(h_6)\cdot m_{i(0)})_{(-1)7})\\
&\a(m_{i(-1)1})q^L(s^{-1}(h_{8}),m_{i(-1)4}h_3)\sigma(s^{-1}(h_7),m_{i(-1)5},h_4)p^R((s^{-1}(h_6)\cdot m_{i(0)})_{(-1)1},s^{-1}(h_5))\\
&\sigma^{-1}(h_9,(s^{-1}(h_6)\cdot m_{i(0)})_{(-1)2},s((s^{-1}(h_6)\cdot m_{i(0)})_{(-1)6}))\b((s^{-1}(h_6)\cdot m_{i(0)})_{(-1)4})\\
&\sigma(m^i_{(-1)2},h_{10},(s^{-1}(h_6)\cdot m_{i(0)})_{(-1)3}s((s^{-1}(h_6)\cdot m_{i(0)})_{(-1)5}))\varphi((s^{-1}(h_6)\cdot m_{i(0)})_{(0)})m^i_{(0)}\\
&\stackrel{(\ref{1e})(\ref{3d})}{=}q^L(m_{i(-1)1},h_1s((s^{-1}(h_6)\cdot m_{i(0)})_{(-1)4}))\sigma(m_{i(-1)2},h_2,s((s^{-1}(h_6)\cdot m_{i(0)})_{(-1)3}))\\
&q^L(s^{-1}(h_{8}),m_{i(-1)3}h_3)\sigma(s^{-1}(h_7),m_{i(-1)4},h_4)p^R((s^{-1}(h_6)\cdot m_{i(0)})_{(-1)1},s^{-1}(h_5))\\
&p^R(h_9,(s^{-1}(h_6)\cdot m_{i(0)})_{(-1)2})\varphi((s^{-1}(h_6)\cdot m_{i(0)})_{(0)})m^i\\
&\stackrel{(\ref{3b})}{=}q^L(m_{i(-1)1},h_1s((s^{-1}(h_5)\cdot m_{i(0)})_{(-1)4}))\sigma(m_{i(-1)2},h_2,s((s^{-1}(h_5)\cdot m_{i(0)})_{(-1)3}))\\
&q^L(s^{-1}(h_7),m_{i(-1)3})\sigma^{-1}(h_8,s^{-1}(h_6)m_{i(-1)4},h_3)p^R((s^{-1}(h_5)\cdot m_{i(0)})_{(-1)1},s^{-1}(h_4))\\
&p^R(h_{9},(s^{-1}(h_5)\cdot m_{i(0)})_{(-1)2})
\varphi((s^{-1}(h_5)\cdot m_{i(0)})_{(0)})m^i\\
&\stackrel{(\ref{2i})}{=}q^L(m_{i(-1)1},h_1s((s^{-1}(h_6)\cdot m_{i(0)})_{(-1)5}))\sigma(m_{i(-1)2},h_2,s((s^{-1}(h_6)\cdot m_{i(0)})_{(-1)4}))\\
&q^L(s^{-1}(h_7),m_{i(-1)3})\sigma^{-1}(h_8,(s^{-1}(h_6)\cdot m_{i(0)})_{(-1)1}s^{-1}(h_5),h_3)p^R((s^{-1}(h_6)\cdot m_{i(0)})_{(-1)2},s^{-1}(h_4))\\
&p^R(h_{9},(s^{-1}(h_6)\cdot m_{i(0)})_{(-1)3})
\varphi((s^{-1}(h_6)\cdot m_{i(0)})_{(0)})m^i\\
&\stackrel{(\ref{3b})}{=}q^L(m_{i(-1)1},h_1s((s^{-1}(h_5)\cdot m_{i(0)})_{(-1)5}))\sigma(m_{i(-1)2},h_2,s((s^{-1}(h_5)\cdot m_{i(0)})_{(-1)4}))\\
&q^L(s^{-1}(h_6),m_{i(-1)3})\sigma(h_7,(s^{-1}(h_5)\cdot m_{i(0)})_{(-1)1},s^{-1}(h_4))p^R(h_8(s^{-1}(h_5)\cdot m_{i(0)})_{(-1)2},s^{-1}(h_3))\\
&p^R(h_{9},(s^{-1}(h_5)\cdot m_{i(0)})_{(-1)3})\varphi((s^{-1}(h_5)\cdot m_{i(0)})_{(0)})m^i\\
&\stackrel{(\ref{2s})}{=}q^L(m_{i(-1)1},h_1s((s^{-1}(h_7)\cdot m_{i(0)})_{(-1)6}))\sigma(m_{i(-1)2},h_2,s((s^{-1}(h_7)\cdot m_{i(0)})_{(-1)5}))\\
&q^L(s^{-1}(h_8),m_{i(-1)3})\sigma^{-1}(h_9((s^{-1}(h_7)\cdot m_{i(0)})_{(-1)1}s^{-1}(h_6)),h_3,s((s^{-1}(h_7)\cdot m_{i(0)})_{(-1)4}))\\
&p^R(h_{10},(s^{-1}(h_7)\cdot m_{i(0)})_{(-1)2}s^{-1}(h_5))g((s^{-1}(h_7)\cdot m_{i(0)})_{(-1)3},s^{-1}(h_4))\varphi((s^{-1}(h_7)\cdot m_{i(0)})_{(0)})m^i\\
&\stackrel{(\ref{2j})}{=}q^L(m_{i(-1)1},h_1s((s^{-1}(h_5)\cdot m_{i(0)})_{(-1)4}))\sigma(m_{i(-1)2},h_2,s((s^{-1}(h_5)\cdot m_{i(0)})_{(-1)3}))\\
&q^L(s^{-1}(h_8),m_{i(-1)3})\sigma^{-1}(h_9(s^{-1}(h_7)m_{i(-1)4}),h_3,s((s^{-1}(h_5)\cdot m_{i(0)})_{(-1)2}))\\
&p^R(h_{10},(s^{-1}(h_6)m_{i(-1)5}))g((s^{-1}(h_5)\cdot m_{i(0)})_{(-1)1},s^{-1}(h_4))\varphi((s^{-1}(h_5)\cdot m_{i(0)})_{(0)})m^i\\
&\stackrel{(\ref{2b})}{=}q^L(m_{i(-1)1},h_1s((s^{-1}(h_5)\cdot m_{i(0)})_{(-1)4}))\sigma(m_{i(-1)2},h_2,s((s^{-1}(h_5)\cdot m_{i(0)})_{(-1)3}))\\
&q^L(s^{-1}(h_7),m_{i(-1)4})\sigma^{-1}(m_{i(-1)3},h_3,s((s^{-1}(h_5)\cdot m_{i(0)})_{(-1)2}))\\
&p^R(h_{8},(s^{-1}(h_6)m_{i(-1)5}))g((s^{-1}(h_5)\cdot m_{i(0)})_{(-1)1},s^{-1}(h_4))
\varphi((s^{-1}(h_5)\cdot m_{i(0)})_{(0)})m^i\\
&\stackrel{(\ref{1h})}{=}q^L(m_{i(-1)1},s(s^{-1}(h_3)m_{i(-1)4})g((s^{-1}(h_2)\cdot m_{i(0)})_{(-1)},s^{-1}(h_1))\\
&q^L(s^{-1}(h_5),m_{i(-1)2})p^R(h_{6},s^{-1}(h_4)m_{i(-1)3})\varphi((s^{-1}(h_2)\cdot m_{i(0)})_{(0)})m^i\\
&\stackrel{(\ref{3c})}{=}f(s^{-1}(h_3),m_{i(-1)})g((s^{-1}(h_2)\cdot m_{i(0)})_{(-1)},s^{-1}(h_1))\varphi((s^{-1}(h_2)\cdot m_{i(0)})_{(0)})m^i,
\end{align*}
as claimed. The structure on $M^*$ can be computed in a similar way. The proof is completed.
\end{proof}

\section{The quantum cocommutativity of $H_0$}
\def\theequation{4.\arabic{equation}}
\setcounter{equation} {0} \hskip\parindent

Let $H$ be a dual quasi-Hopf algebra. In Proposition 2.6, we have made $H$ an object in $^H_H\mathcal{YD}$. In this section, we will prove that $H_0$, which was firstly introduced in \cite{BN2}, is a braided cocommutative coalgebra in $^H_H\mathcal{YD}$. First of all, we need the following definition.

\begin{definition}
A coalgebra in $^H_H\mathcal{YD}$ is a Yetter-Drinfeld module $C$ over $H$ satisfying
\begin{itemize}
  \item [(1)] $C$ is a coalgebra in $^H\mathcal{M}$.
  \item [(2)] For all $h\in H,c\in C$,
  \begin{align}
&(h\cdot c)_1\o (h\cdot c)_2\nonumber\\
&=\sigma(h_1,c_{1(-1)},c_{2(-1)1})\sigma^{-1}((h_2\cdot c_{1(0)})_{(-1)1},h_3,c_{2(-1)2})\nonumber\\
&\quad \sigma((h_2\cdot c_{1(0)})_{(-1)2},(h_4\cdot c_{2(0)})_{(-1)},h_5)(h_2\cdot c_{1(0)})_{(0)}\o (h_4\cdot c_{2(0)})_{(0)}.
\end{align}
\end{itemize}
\end{definition}

Let $H$ be a dual quasi-Hopf algebra. Recall from \cite{BN2} that $H$ becomes a left $H$-comodule coalgebra with the following comultiplication
\begin{equation}
h_{\un{1}}\o h_{\un{2}}=\sigma(h_1,s(h_3),h_7s(h_9))\sigma^{-1}(s(h_4),h_6,s(h_{10}))\a(h_5)h_2\o h_8,\label{4a}
\end{equation}
for all $h\in H$. And the counit is $\b$. We denote this structure of $H$ by $H_0$.

By the identities (\ref{1a}) and (\ref{1e}), the comultiplication is equivalent to
\begin{equation}
h_{\un{1}}\o h_{\un{2}}=\sigma(h_1,s(h_5),h_{7})\sigma^{-1}(h_2s(h_4),h_{8},s(h_{10}))\a(h_6)h_3\o h_{9}.\label{4b}
\end{equation}

\begin{proposition}
Let $H$ be a dual quasi-Hopf algebra. Then $H_0$ is a coalgebra in $^H_H\mathcal{YD}$.
\end{proposition}

\begin{proof}
First of all, for all $a,h\in H$,
\begin{align*}
&\sigma(h_1,a_{\un{1}(-1)},a_{\un{2}(-1)1})\sigma^{-1}((h_2\tr a_{\un{1}(0)})_{(-1)1},h_3,a_{\un{2}(-1)2})\\
&\sigma((h_2\tr a_{\un{1}(0)})_{(-1)2},(h_4\tr a_{\un{2}(0)})_{(-1)},h_5)(h_2\tr a_{\un{1}(0)})_{(0)}\o (h_4\tr a_{\un{2}(0)})_{(0)}\\
&=\sigma(h_1,a_{\un{1},1}s(a_{\un{1},3}),a_{\un{2},1}s(a_{\un{2},5}))\sigma^{-1}((h_2\tr a_{\un{1},2})_{1}s((h_2\tr a_{\un{1},2})_{5}), h_3,a_{\un{2},2}s(a_{\un{2},4}))\\
&\sigma((h_2\tr a_{\un{1},2})_{2}s((h_2\tr a_{\un{1},2})_{4}),(h_4\tr a_{\un{2},3})_{1}s((h_4\tr a_{\un{2},3})_{3}),h_5)(h_2\tr a_{\un{1},2})_{3}\o (h_4\tr a_{\un{2},3})_{2}\\
&=\sigma(h_1,a_{\un{1},1}s(a_{\un{1},13}),a_{\un{2},1}s(a_{\un{2},13}))\sigma^{-1}((h_{4}a_{\un{1},4})s(h_{8}a_{\un{1},8}), h_{13},a_{\un{2},2}s(a_{\un{2},12}))\\
&\sigma((h_{5}a_{\un{1},5})s(h_{7}a_{\un{1},7}),(h_{16}a_{\un{2},5})s(h_{18}a_{\un{2},7}),h_{23})(h_{6}a_{\un{1},6})\o (h_{17}a_{\un{2},6})\\
&\sigma(h_2,a_{\un{1},2},s(a_{\un{1},12}))\sigma(h_3a_{\un{1},3},s(a_{\un{1},10})s(h_{10}),h_{12})g(h_9,a_{\un{1},9})q^R(s(a_{\un{1},11}),s(h_{11}))\\
&\sigma(h_{14},a_{\un{2},3},s(a_{\un{2},11}))\sigma(h_{15}a_{\un{2},4},s(a_{\un{2},9})s(h_{20}),h_{22})g(h_{19},a_{\un{2},8})q^R(s(a_{\un{2},10}),s(h_{21}))\\
&=\sigma(h_1,a_{\un{1},1}s(a_{\un{1},13}),a_{\un{2},1}s(a_{\un{2},13}))\underline{\sigma^{-1}((h_{4}a_{\un{1},4})s(h_{8}a_{\un{1},8}), h_{13},a_{\un{2},2}s(a_{\un{2},12}))}\\
&\sigma((h_{5}a_{\un{1},5})s(h_{7}a_{\un{1},7}),(h_{16}a_{\un{2},5})s(h_{18}a_{\un{2},7}),h_{25})(h_{6}a_{\un{1},6})\o (h_{17}a_{\un{2},6})\\
&\sigma(h_2,a_{\un{1},2},s(a_{\un{1},12}))\underline{\sigma(h_3a_{\un{1},3},s(h_{9}a_{\un{1},9}),h_{12})}g(h_{10},a_{\un{1},10})q^R(s(a_{\un{1},11}),s(h_{11}))\\
&\sigma(h_{14},a_{\un{2},3},s(a_{\un{2},11}))\sigma(h_{15}a_{\un{2},4},s(a_{\un{2},9})s(h_{20}),h_{22})g(h_{19},a_{\un{2},8})q^R(s(a_{\un{2},10}),s(h_{21}))\\
&=\sigma(h_1,a_{\un{1},1}s(a_{\un{1},14}),a_{\un{2},1}s(a_{\un{2},17}))\sigma^{-1}(s(h_{9}a_{\un{1},9}), h_{12},a_{\un{2},3}s(a_{\un{2},15}))\\
&\sigma(h_{4}a_{\un{1},4},s(h_{8}a_{\un{1},8})),h_{13}(a_{\un{2},4}s(a_{\un{2},14}))\sigma(h_{14},a_{\un{2},5},s(a_{\un{2},13}))\\
&\sigma((h_{5}a_{\un{1},5})s(h_{7}a_{\un{1},7}),(h_{16}a_{\un{2},7})s(h_{18}a_{\un{2},9}),h_{23})\sigma(h_2,a_{\un{1},2},s(a_{\un{1},13}))\\
&\sigma^{-1}(h_3a_{\un{1},3},s(a_{\un{1},12}),a_{\un{2},2}s(a_{\un{2},16}))g(h_{10},a_{\un{1},10})q^R(s(a_{\un{1},11}),s(h_{11}))\\
&\sigma(h_{15}a_{\un{2},6},s(a_{\un{2},11})s(h_{20}),h_{22})g(h_{19},a_{\un{2},10})q^R(s(a_{\un{2},12}),s(h_{21}))(h_{6}a_{\un{1},6})\o (h_{17}a_{\un{2},8})\\
&=\sigma(h_1,a_{1},s(a_{27}))\sigma^{-1}(s(h_{7}a_{7}), h_{10},a_{13}s(a_{25}))\sigma(h_{2}a_{2},s(h_{6}a_{6}),h_{11}(a_{14}s(a_{24}))\\
&\sigma(h_{12},a_{15},s(a_{23}))\sigma((h_{3}a_{3})s(h_{5}a_{5}),(h_{14}a_{17})s(h_{16}a_{19}),h_{21})g(h_{8},a_{8})q^R(s(a_{9}),s(h_{9}))\\
&\sigma(h_{13}a_{16},s(h_{17}a_{20}),h_{20})g(h_{18},a_{21})q^R(s(a_{22}),s(h_{19}))\sigma^{-1}(s(a_{10}),a_{12},s(a_{26}))\a(a_{11})\\
&h_{4}a_{4}\o h_{15}a_{18}\\
&=\sigma(h_1,a_{1},s(a_{24}))\sigma((h_{3}a_{3})s(h_{5}a_{5}),(h_{15}a_{16})s(h_{17}a_{18}),h_{22})\sigma^{-1}(s(h_{7}a_{7}), h_{12}a_{13},s(a_{23}))\\
&\sigma(h_{2}a_{2},s(h_{6}a_{6}),(h_{13}a_{14})s(a_{22}))\sigma(h_{14}a_{15},s(h_{18}a_{19}),h_{21})g(h_{8},a_{8})q^R(s(a_{9}),s(h_{10}))\\
&\sigma^{-1}(s(a_{10})s(h_9),h_{11},a_{12})g(h_{19},a_{20})q^R(s(a_{21}),s(h_{20}))\a(a_{11})h_{4}a_{4}\o h_{16}a_{17}\\
&=\sigma(h_1,a_{1},s(a_{26}))\sigma(h_{2}a_{2},s(h_{10}a_{10})(h_{15}a_{16}),s(a_{25}))\sigma(h_{3}a_{3},s(h_{9}a_{9}),h_{16}a_{17})\\
&\sigma(((h_{4}a_{4})s(h_{8}a_{8}))(h_{17}a_{18}),s(h_{21}a_{22}),h_{24})\sigma^{-1}((h_{5}a_{5})s(h_{7}a_{7}),h_{18}a_{19},s(h_{20}a_{21}))\\
&\sigma^{-1}(s(h_{11}a_{11}),h_{14},a_{15})g(h_{12},a_{12})g(h_{22},a_{23})q^R(s(a_{24}),s(h_{23}))q^R(s(a_{13}),s(h_{13}))\a(a_{14})\\
&h_6a_6\o h_{19}a_{20}\\
&=\sigma(h_1,a_{1},s(a_{22}))\sigma(h_{2}a_{2},s(h_{8}a_{8}),h_{13}a_{14})\sigma(((h_{3}a_{3})s(h_{7}a_{7}))(h_{14}a_{15}),s(h_{18}a_{19}),h_{21})\\
&\sigma^{-1}((h_{4}a_{4})s(h_{6}a_{6}),h_{15}a_{16},s(h_{17}a_{18}))g(h_{10},a_{10})\sigma^{-1}(s(h_{9}a_{9}),h_{12},a_{13})\\
&g(h_{19},a_{20})q^R(s(a_{11}),s(h_{11}))q^R(s(a_{21}),s(h_{20}))\a(a_{12})h_5a_5\o h_{16}a_{17}\\
&=\sigma(h_1,a_{1},s(a_{20}))\sigma(h_{2}a_{2},s(h_{16}a_{17}),h_{19})\sigma(h_{3}a_{3},s(h_{7}a_{7}),h_{12}a_{13})\\
&\sigma^{-1}((h_{4}a_{4})s(h_{6}a_{6}),h_{13}a_{14},s(h_{15}a_{16}))\underline{g(h_{9},a_{9})}\underline{\sigma^{-1}(s(h_{8}a_{8}),h_{11},a_{12})}\\
&g(h_{17},a_{18})\underline{q^R(s(a_{10}),s(h_{10}))}q^R(s(a_{19}),s(h_{18}))\underline{\a(a_{11})}h_5a_5\o h_{14}a_{15}\\
&=\sigma(h_1,a_{1},s(a_{17}))\sigma(h_{2}a_{2},s(h_{14}a_{14}),h_{17})\sigma(h_{3}a_{3},s(h_{7}a_{7}),h_{10}a_{10})\\
&\sigma^{-1}((h_{4}a_{4})s(h_{6}a_{6}),h_{11}a_{11},s(h_{13}a_{13}))g(h_{8},a_{8})\lambda(h_9,a_9)g(h_{15},a_{15})\\
&q^R(s(a_{16}),s(h_{16}))h_5a_5\o h_{12}a_{12}\\
&=\sigma(h_1,a_{1},s(a_{16}))\sigma(h_{2}a_{2},s(h_{13}a_{13}),h_{16})\sigma(h_{3}a_{3},s(h_{7}a_{7}),h_{9}a_{9})\\
&\sigma^{-1}((h_{4}a_{4})s(h_{6}a_{6}),h_{10}a_{10},s(h_{12}a_{12}))\a(h_{8}a_{8})g(h_{14},a_{14})\\
&q^R(s(a_{15}),s(h_{15}))h_5a_5\o h_{11}a_{11}\\
&=\sigma(h_1,a_1,s(a_7))\sigma(h_2a_2,s(a_5)s(h_5),h_7)g(h_4,a_4)q^R(s(a_6),s(h_6))(h_3a_3)_{\un{1}}\o(h_3a_3)_{\un{2}}\\
&=(h\tr a)_{\un{1}}\o (h\tr a)_{\un{2}}.
\end{align*}
Then
\begin{align*}
\b(h\tr a)&=\sigma(h_1,a_1,s(a_7))\sigma(h_2a_2,s(a_5)s(h_5),h_7)g(h_4,a_4)q^R(s(a_6),s(h_6))\b(h_3a_3)\\
&=\sigma(h_1,a_1,s(a_7))\sigma(h_2a_2,s(h_4 a_4),h_7)g(h_5,a_5)q^R(s(a_6),s(h_6))\b(h_3a_3)\\
&=\sigma(h_1,a_1,s(a_6))\underline{\sigma(h_2a_2,s(h_4 a_4),h_6)}U(h_5,a_5)\underline{\b(h_3a_3)}\\
&=\sigma(h_1,a_1,s(a_4))p^L(s(h_2 a_2),h_4)U(h_3,a_3)\\
&\stackrel{(\ref{2z})}{=}\sigma(h_1,a_1,s(a_3))p^R(h_2, a_2)\\
&=\varepsilon(h)\b(a).
\end{align*}
The proof is completed.
\end{proof}

\begin{lemma}
Let $H$ be a dual quasi-Hopf algebra. We have the following identities
\begin{align}
&(1)~q^R(h_1,s(g_2))\sigma^{-1}(h_2,s(g_1),g_3)=\varepsilon(h)\a(g),\label{4c}\\
&(2)~g(s(h_1),h_3)\a(h_2)=\b(s(h)),\ f(h_1,s(h_3))\b(h_2)=\a(s(h)),\label{4d}\\
&(3)~q^R(s(g_6),s^2(g_2)s(h_2))\sigma(s(g_5),s^2(g_3),s(h_1))f(h_3,s(g_1))\b(s(g_4))=q^L(h,s(g)).\label{4e}
\end{align}
\end{lemma}

\begin{proof}
The verification of (\ref{4c}) is straightforward and left to the reader, and (\ref{4c}) has been introduced in \cite{Ba}. Now we will prove (\ref{4e}). By (\ref{2c}), we obtain
\begin{align}
q^R(h,gk)=&q^R(h_1,g_3)q^R(h_2g_4,k_3)\sigma^{-1}(h_3,g_5,k_4)\nonumber\\
&\sigma^{-1}(h_4(g_6k_5),s^{-1}(k_2),s^{-1}(g_2))g(s^{-1}(k_1),s^{-1}(g_1)).
\end{align}
And using (\ref{1b}) and (\ref{1e})
\begin{align*}
&q^R(h_1g_1,k_1)\sigma^{-1}(h_2,g_2,k_2)\\
&=\sigma(h_1g_1,k_3,s^{-1}(k_1))\sigma^{-1}(h_2,g_2,k_4)\a(s^{-1}(k_2))\\
&=\sigma^{-1}(h_1,g_1,k_5s^{-1}(k_3))\sigma(g_2,k_6,s^{-1}(k_2))\sigma(h_2,g_3k_7,s^{-1}(k_1))\a(s^{-1}(k_4))\\
&=\sigma(g_1,k_4,s^{-1}(k_2))\sigma(h,g_2k_5,s^{-1}(k_1))\a(s^{-1}(k_3))\\
&=q^R(g_1,k_2)\sigma(h,g_2k_3,s^{-1}(k_1)).
\end{align*}
Hence
\begin{align*}
q^R(h,gk)=&q^R(h_1,g_3)q^R(g_4,k_4)\sigma(h_2,g_5k_5,s^{-1}(k_3))\\
&\sigma^{-1}(h_3(g_6k_6),s^{-1}(k_2),s^{-1}(g_2))g(s^{-1}(k_1),s^{-1}(g_1)).
\end{align*}
Now
\begin{align*}
&q^R(s(g_6),s^2(g_2)s(h_2))\sigma(s(g_5),s^2(g_3),s(h_1))f(h_3,s(g_1))\b(s(g_4))\\
&=q^R(s(g_{13}),s^2(g_4))q^R(s^2(g_5),s(h_4))\sigma(s(g_{12}),s^2(g_6)s(h_3),h_5)\\
& \sigma^{-1}(s(g_{11})(s^2(g_7)s(h_2)),h_6,s(g_3))g(h_7,s(g_2))\sigma(s(g_{10}),s^2(g_8),s(h_1))f(h_8,s(g_1))\b(s(g_9))\\
&\stackrel{(\ref{1a})}{=}q^R(s(g_{11}),s^2(g_2))q^R(s^2(g_3),s(h_4))\sigma(s(g_{10}),s^2(g_4)s(h_3),h_5)\\
&\quad \sigma^{-1}((s(g_{8})s^2(g_6))s(h_1),h_6,s(g_1))\sigma(s(g_{9}),s^2(g_5),s(h_2))\b(s(g_7))\\
&\stackrel{(\ref{1e})}{=}q^R(s(g_{9}),s^2(g_2))q^R(s^2(g_3),s(h_4))\underline{\sigma(s(g_{8}),s^2(g_4)s(h_3),h_5)}\sigma^{-1}(s(h_1),h_6,s(g_1))\\
&\quad \underline{\sigma(s(g_{7}),s^2(g_5),s(h_2))}\b(s(g_6))\\
&\stackrel{(\ref{1b})}{=}q^R(s(g_{10}),s^2(g_2))q^R(s^2(g_3),s(h_5))\sigma^{-1}(s^2(g_4),s(h_4),h_6)\sigma(s(g_9),s^2(g_5),s(h_3)h_7)\\
&\quad \sigma(s(g_8)s^2(g_6),s(h_2),h_8)\sigma^{-1}(s(h_1),h_{10},s(g_1))\b(s(g_7))\\
&\stackrel{(\ref{4c})(\ref{1e})}{=}q^R(s(g_{4}),s^2(g_2))\a(h_2)\sigma^{-1}(s(h_1),h_{3},s(g_1))\b(s(g_3))\\
&=\a(h_2)\sigma^{-1}(s(h_1),h_{3},s(g))\\
&=q^L(h,s(g)),
\end{align*}
as claimed.
\end{proof}

\begin{theorem}
Let $H$ be a dual quasi-Hopf algebra. Then $H_0$ is braided cocommutative as an coalgebra in $^H_H\mathcal{YD}$, that is, for all $h\in H$,
$$h_{\un{1}}\o h_{\un{2}}=h_{\un{1}(-1)}\tr h_{\un{2}}\o h_{\un{1}(0)}.$$
\end{theorem}

\begin{proof}
For all $h\in H$,
\begin{align*}
&h_{\un{1}(-1)}\tr h_{\un{2}}\o h_{\un{1}(0)}\\
&=h_{\un{1}1}s(h_{\un{1}3})\tr h_{\un{2}}\o h_{\un{1}2}\\
&=\sigma(h_1,s(h_5),h_9s(h_{11}))\sigma^{-1}(s(h_6),h_8,s(h_{12}))\a(h_7)h_{2}s(h_{4})\tr h_{10}\o h_{3}\\
&=\underline{\sigma(h_1,s(h_{17}),h_{21}s(h_{29}))}\sigma^{-1}(s(h_{18}),h_{20},s(h_{30}))\\
&\quad \underline{\sigma(h_2s(h_{16}),h_{22},s(h_{28}))}\sigma((h_3s(h_{15}))h_{23},s(h_{26})s(h_6s(h_{12})),h_8s(h_{10}))\\
&\quad g(h_5s(h_{13}),h_{25})q^R(s(h_{27}),s(h_7s(h_{11})))\a(h_{19})(h_{4}s(h_{14}))h_{24}\o h_9\\
&\stackrel{(\ref{1b})}{=}\sigma(s(h_{17(2)}),h_{21},s(h_{29}))\sigma(h_1,s(h_{17(1)})h_{22(1)},s(h_{28}))\sigma^{-1}(s(h_{18}),h_{20},s(h_{30}))\\
&\quad \sigma(h_2,s(h_{16}),h_{22(2)})\sigma((h_3s(h_{15}))h_{23},s(h_{26})s(h_6s(h_{12})),h_8s(h_{10}))\\
&\quad g(h_5s(h_{13}),h_{25})q^R(s(h_{27}),s(h_7s(h_{11})))\a(h_{19})(h_{4}s(h_{14}))h_{24}\o h_9\\
&\stackrel{(\ref{1e})}{=}\sigma(h_1,s(h_{15}),h_{17})\sigma((h_2s(h_{14}))h_{18},s(h_{21})s(h_5s(h_{11})),h_7s(h_{9}))\\
&\quad g(h_4s(h_{12}),h_{20})q^R(s(h_{22}),s(h_6s(h_{10})))\a(h_{16})(h_{3}s(h_{13}))h_{19}\o h_8\\
&\stackrel{(\ref{1h})}{=}\sigma(h_1,s(h_{15}),h_{17})\sigma((h_2s(h_{14}))h_{18},s((h_4s(h_{12}))h_{20}),h_7s(h_{9}))\\
&\quad g(h_5s(h_{11}),h_{21})q^R(s(h_{22}),s(h_6s(h_{10})))\a(h_{16})(h_{3}s(h_{13}))h_{19}\o h_8\\
&\stackrel{(\ref{1a})(\ref{1e})}{=}\sigma(h_3,s(h_{13}),h_{17})\sigma(h_1,s((h_4s(h_{12}))h_{18}),h_7s(h_{9}))\\
&\quad g(h_5s(h_{11}),h_{19})q^R(s(h_{20}),s(h_6s(h_{10})))\a(h_{14})h_{2}\o h_8\\
&\stackrel{(\ref{1a})(\ref{1e})}{=}\underline{\sigma(h_4,s(h_{12}),h_{15})}\sigma(h_1,s(h_3),h_7s(h_{9}))\\
&\quad \underline{g(h_5s(h_{11}),h_{16})}q^R(s(h_{17}),s(h_6s(h_{10})))\a(h_{13})h_{2}\o h_8\\
&\stackrel{(\ref{1h'})}{=}g(h_4,s(h_{15})h_{17})g(s(h_{14}),h_{21})\sigma(s(h_{22}),s^2(h_{13}),s(h_5))f(h_6,s(h_{12}))\\
&\quad \sigma(h_1,s(h_3),h_8s(h_{10}))q^R(s(h_{24}),s(h_7s(h_{11})))\a(h_{16})h_{2}\o h_{9}\\
&\stackrel{(\ref{1e})}{=}\varepsilon(h_1)\underline{g(s(h_{14}),h_{16})}\sigma(s(h_{17}),s^2(h_{13}),s(h_5))f(h_6,s(h_{12}))\\
&\quad \sigma(h_2,s(h_4),h_8s(h_{10}))q^R(s(h_{18}),s(h_7s(h_{11})))\underline{\a(h_{15})}h_{3}\o h_{9}\\
&\stackrel{(\ref{4d})}{=}\varepsilon(h_1)\sigma(s(h_{15}),s^2(h_{13}),s(h_5))f(h_6,s(h_{12}))\\
&\quad \sigma(h_2,s(h_4),h_8s(h_{10}))q^R(s(h_{16}),s(h_7s(h_{11})))\b(s(h_{14}))h_{3}\o h_{9}\\
&=\varepsilon(h_1)\underline{\sigma(s(h_{15}),s^2(h_{13}),s(h_5))}\underline{f(h_7,s(h_{11}))}\\
&\quad \sigma(h_2,s(h_4),h_8s(h_{10}))\underline{q^R(s(h_{16}),s^2(h_{12})s(h_6))}\underline{\b(s(h_{14}))}h_{3}\o h_{9}\\
&\stackrel{(\ref{4e})}{=}\sigma(h_1,s(h_3),h_5s(h_{7}))q^L(h_4,s(h_{8}))h_{2}\o h_{6}\\
&\stackrel{(\ref{4a})}{=}h_{\un{1}}\o h_{\un{2}}.
\end{align*}
The proof is completed.
\end{proof}

\section*{Acknowledgement}

This work was supported by the NSF of China (No. 11901240, 11871301).

\end{document}